\newtheorem{theorem}{Theorem}    
\newtheorem{corollary}[theorem]{Corollary}
\newtheorem{lemma}[theorem]{Lemma}
\newtheorem{remark}[theorem]{Remark}
\newtheorem{definition}[theorem]{Definition}
\theoremstyle{definition}
\numberwithin{theorem}{section} \numberwithin{theorem}{section}
\numberwithin{equation}{section}
\begin{document}
\title[Compactness of commutators in the Bloom setting: the off-diagonal case]
{Compactness of commutators in the Bloom setting: the off-diagonal case}

\author{Yongming Wen$^*$}

\subjclass[2020]{
42B25; 42B30; 42B35; 47B47.
}

%
\keywords{weighted $\rm{VMO}(\mathbb{R}^n)$ spaces, compactness, commutator, sparse operator, fractional integrals, fractional maximal operators.}
\thanks{$^*$Corresponding author.}
\thanks{Supported by the Natural Science Foundation of Fujian Province(Nos. 2021J05188), the scientific research project of The Education Department of Fujian Province(No.JAT200331), President’s fund of Minnan Normal University (No. KJ2020020), Institute of Meteorological Big Data-Digital Fujian, Fujian Key Laboratory of Data Science and Statistics and Fujian Key Laboratory of Granular Computing and Applications (Minnan Normal University), China.}
\address{School of Mathematics and Statistics, Minnan Normal University, Zhangzhou 363000,  China} \email{wenyongmingxmu@163.com}



\begin{abstract}
In this paper, we establish a new characterization of weighted $\rm{VMO}$ spaces, which are essential different from the classical $\rm{VMO}$ spaces, via the compactness of sparse operators, commutators of  Riesz potentials and fractional maximal operators. Some of our results are new even in unweighted setting.
\end{abstract}

\maketitle

\section{Introduction and main results}
It is well known that if a function $b\in\rm{BMO}(\mathbb{R}^n)$, then the commutator $[b,T]f:=bT(f)-T(bf)$ of a Calder\'{o}n-Zygmund operator is bounded on $L^p(\mathbb{R}^n)$ for any $1<p<\infty$, see \cite{CRW}. Particularly, Coifman et al. \cite{CRW} proved that the commutator of Riesz transform is bounded on $L^p(\mathbb{R}^n)$ if and only if $b\in\rm{BMO}(\mathbb{R}^n)$. Subsequently, Janson \cite{Jan} and Uchiyama \cite{U} independently established the full converse of the work in \cite{CRW}. Moreover, Uchiyama \cite{U} showed that the $L^p$-boundedness of $[b,T]$ could be refined to a compactness one if the function space $\rm{BMO}(\mathbb{R}^n)$ is replaced by $\rm{VMO}(\mathbb{R}^n)$, which is defined to be the closure of $C_c^\infty(\mathbb{R}^n)$ in the $\rm{BMO}(\mathbb{R}^n)$ topology. Since then, the study of compactness of commutators has blossomed, see \cite{BeDMT,ChaT,ChD,ChDW,ClC,GHWY,GWY,GZ,ShFL,TX,WaX,Wa,WY,XYY} and the references there in. Among them, Clop and Cruz \cite{ClC} first established the weighted compactness of commutators of Calder\'{o}n-Zygmund operators. Wu and Yang \cite{WY} established the characterizations of weighted compactness of Riesz transform and Riesz potential via $\rm{VMO}(\mathbb{R}^n)$.

Recently, equivalent characterizations of two-weight norm inequalities for commutators of singular integrals and related operators have attracted lots of attentions. This topic is originated from Bloom \cite{Bl}, who extended the work of Coifman et al. \cite{CRW} to the following two weight version:
$$\|[b,H]\|_{L^p(\lambda_1)\to L^p(\lambda_2)}\simeq \|b\|_{ BMO_\nu(\mathbb R)}\, \quad \text{for}\,\, 1<p<\infty,\,\nu=(\lambda_1/\lambda_2)^{1/p},\,\lambda_1,\,\lambda_2\in A_p,$$
where $H$ is the Hilbert transform and $\rm{BMO}_\nu(\mathbb R^n)$ is the following weighted $\rm{BMO}$ space defined by
$${\rm{BMO}}_\nu(\mathbb R^n):=\Big\{b\in L^1_{\rm loc}(\mathbb{R}^n):\,\, \|b\|_{\rm{BMO}_\nu(\mathbb R^n)}:=\sup_{B\subset\mathbb{R}^n}\frac1{\nu(B)}\int_B|b(y)-\langle b\rangle_B|dy<\infty\Big\},$$
where $\langle b\rangle_B=|B|^{-1}\int_Bb$. It is easy to see that $\rm{BMO}_\nu(\mathbb{R}^n)$ is just the classical $\rm{BMO}(\mathbb{R}^n)$ space when $\nu=1$. In 2016, Holmes et al. \cite{HLW,HRS} extended Bloom's result to commutators of Riesz transforms and fractional integrals. Furthermore, Lerner, Ombrosi and Rivera-R\'{i}os \cite{LOR,LOR1} obtained quantitative weighted estimates for the commutators of Calderon-Zygmund operators $[b,T]$ via the following pointwise sparse dominations:
$$|[b,T]f(x)|\lesssim\sum_{j=1}^{3^n}\Big(\mathcal{T}_{\mathcal{S}_j,b}(|f|)(x)+
(\mathcal{T}_{\mathcal{S}_j,b})^\ast(|f|)(x)\Big),$$
where $\mathcal{S}_j$ is a sparse family,
\begin{equation}\label{sparse1}
\mathcal{T}_{\mathcal{S},b}(f)(x)=\sum_{Q\in\mathcal{S}}|b(x)-\langle b\rangle_Q|\langle f\rangle_Q\chi_Q(x)
\end{equation}
and
\begin{align*}
(\mathcal{T}_{\mathcal{S},b})^\ast(f)(x)=\sum_{Q\in\mathcal{S}}\langle|b-\langle b\rangle_Q|f\rangle_Q\chi_Q(x).
\end{align*}
Besides, Lerner et al. \cite{LOR1} also gave a characterization of the two-weighted boundedness of commutators of homogeneous singular integrals. Following the work of \cite{LOR1}, Accomazzo et al. \cite{AMPRR} improved the work in \cite{HRS} by proving that
$$[b,I_\alpha]f(x)=\int_{\mathbb{R}^n}(b(x)-b(y))\frac{f(y)}{|x-y|^{n-\alpha}}dy$$
is dominated by the sparse operators
$$\sum_{j=1}^{3^n}\Big(\mathcal{T}_{S_j,b}^\alpha(|f|)(x)+
(\mathcal{T}_{S_j,b}^\alpha)^\ast(|f|)(x)\Big),$$
where
\begin{equation}\label{sparse2}
\mathcal{T}_{\mathcal{S},b}^\alpha(f)(x)=\sum_{Q\in\mathcal{S}}|Q|^{\alpha/n}|b(x)-\langle b\rangle_Q|\langle
f\rangle_Q\chi_Q(x)
\end{equation}
and
\begin{equation}\label{sparse3}
(\mathcal{T}_{\mathcal{S},b}^\alpha)^\ast(f)(x)=\sum_{Q\in\mathcal{S}}|Q|^{\alpha/n}\langle
|b-\langle b\rangle_Q|f\rangle_Q\chi_Q(x).
\end{equation}
And they as well characterized the two weight estimates of $[b,I_\alpha]$, i.e., for $1<p<n/\alpha$, $1/p-1/q=\alpha/n$, $\lambda_1,\,\lambda_2\in A_{p,q}$, $\nu=\lambda_1/\lambda_2$,
$$\|[b,I_\alpha]\|_{L^p(\lambda_1^p)\to L^q(\lambda_2^q)}\simeq \|b\|_{ BMO_\nu(\mathbb{R}^n)}.$$

However, the study of the compactness of commutators in the two weight setting has just begun by the recent work of Lacey and Li \cite{LL}, in which the authors showed that $[b,R_j]$ is compact from $L^{p}(\lambda_1)$ to $L^{p}(\lambda_2)$ if and only if $b\in \rm{VMO}_\nu(\mathbb{R}^n)$, where $1<p<\infty$, $\lambda_1,\lambda_2\in A_p$, $\nu=(\lambda_1/\lambda_2)^{1/p}$. Moreover, they pointed out that $\rm{VMO}_\nu(\mathbb{R}^n)$ spaces are essential different from the classical $\rm{VMO}(\mathbb{R}^n)$ spaces in the sense that $C_c^\infty(\mathbb{R}^n)$ need not be contained in $\rm{BMO}_\nu(\mathbb{R}^n)$ for $n\geq2$. Later on, Liu, Wu and Yang \cite{LiWY} established the two-weight compactness for commutators of Calder\'{o}n-Zygmund operators and fractional integral operators via the extrapolation theorems of compactness. Recently, Chen et al. \cite{ChLLV} further established the characterization of compactness for the sparse operator $\mathcal{T}_{\mathcal{S},b}$ defined as \eqref{sparse1} in the two weight setting on the homogeneous spaces, as applications, they obtained the compactness characterization for the commutators of Hardy-Littlewood maximal operators and commutators of Calder\'{o}n-Zygmund operators with kernels satisfying the non-degenerate conditions. An interesting and natural question would ask: How is the case for sparse operator $\mathcal{T}_{\mathcal{S},b}^\alpha$ defined as \eqref{sparse2}? Our first result is to address this question.

\begin{theorem}\label{theorem1.1}
Let $\alpha\in(0,n)$, $1<p<n/\alpha$, $1/p-1/q=\alpha/n$. Assume that $\lambda_1,\lambda_2\in A_{p,q}$, $\nu=\lambda_1/\lambda_2$, $b\in \rm{BMO}_\nu(\mathbb{R}^n)$ and $\mathcal{T}_{\mathcal{S},b}^\alpha$ is defined as \eqref{sparse2}.

\medskip

{\rm (1)}\, If $b\in\rm{VMO}_{\nu}(\mathbb{R}^n)$, then for any $\eta\in(0,1)$ and each $\eta$-sparse family $\mathcal{S}$, $\mathcal{T}_{\mathcal{S},\alpha}^b$ is compact from $L^p(\lambda_1^p)$ to $L^q(\lambda_2^q)$;

\medskip

{\rm (2)}\, If for any $\eta\in(0,1)$ and each $\eta$-sparse family $\mathcal{S}$, $\mathcal{T}_{\mathcal{S},\alpha}^b$ is compact from $L^p(\lambda_1^p)$ to $L^q(\lambda_2^q)$, then $b\in\rm{VMO}_{\nu}(\mathbb{R}^n)$.
\end{theorem}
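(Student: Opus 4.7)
The sufficiency direction (1) is by a three-scale approximation. The starting point is a weighted Sarason-type characterization: $b\in\mathrm{VMO}_\nu(\mathbb{R}^n)$ if and only if the normalized oscillation $\frac{1}{\nu(Q)}\int_Q |b-\langle b\rangle_Q|$ vanishes as $\ell(Q)\to 0$, as $\ell(Q)\to\infty$, and as $\mathrm{dist}(0,Q)\to\infty$. Given $\epsilon>0$, I partition $\mathcal{S}=\mathcal{S}^\epsilon_{\mathrm{bad}}\sqcup\mathcal{S}^\epsilon_{\mathrm{good}}$, where $\mathcal{S}^\epsilon_{\mathrm{bad}}$ collects the cubes that are too small, too large, or too far from the origin, and $\mathcal{S}^\epsilon_{\mathrm{good}}$ consists of finitely many scales lying inside a large fixed ball. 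On the bad part, the oscillation is $<\epsilon$ uniformly, so the pointwise estimate on $|b-\langle b\rangle_Q|$ combined with the off-diagonal two-weight bound for fractional sparse operators of Accomazzo et al.\ yields $\|\mathcal{T}_{\mathcal{S}^\epsilon_{\mathrm{bad}},b}^\alpha\|_{L^p(\lambda_1^p)\to L^q(\lambda_2^q)}=O(\epsilon)$.

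To verify compactness of $\mathcal{T}_{\mathcal{S}^\epsilon_{\mathrm{good}},b}^\alpha$, I would apply a weighted Fr\'echet--Kolmogorov criterion in $L^q(\lambda_2^q)$: uniform boundedness (already known), uniform tail decay (automatic, since the surviving cubes sit in a fixed ball), and $L^q$-equicontinuity under translations. Equicontinuity uses that only finitely many scales survive and that $b\in L^1_{\mathrm{loc}}$. Letting $\epsilon\to 0$ exhibits $\mathcal{T}_{\mathcal{S},b}^\alpha$ as a norm limit of compact operators, hence compact.

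For the necessity direction (2), I argue by contrapositive. If $b\notin\mathrm{VMO}_\nu$, then at least one vanishing condition fails, providing $\delta>0$ and a sequence $\{Q_k\}$ of cubes that either shrink to a point, expand without bound, or escape to infinity, with $\frac{1}{\nu(Q_k)}\int_{Q_k}|b-\langle b\rangle_{Q_k}|\geq\delta$. In each failure mode one can pass to a pairwise disjoint subsequence and embed $\{Q_k\}$ into an $\eta$-sparse family $\mathcal{S}$. Decompose $Q_k=E_k^+\cup E_k^-$ according to the sign of $b-\langle b\rangle_{Q_k}$ and take $f_k$ to be a weighted normalization of $\chi_{E_k^-}\lambda_1^{-p}$ with $\|f_k\|_{L^p(\lambda_1^p)}\simeq 1$. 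Then $\mathcal{T}_{\mathcal{S},b}^\alpha f_k(x)\gtrsim|Q_k|^{\alpha/n}(b(x)-\langle b\rangle_{Q_k})\langle f_k\rangle_{Q_k}$ on $E_k^+$, and a standard $A_{p,q}$ computation produces $\|\mathcal{T}_{\mathcal{S},b}^\alpha f_k\|_{L^q(\lambda_2^q)}\gtrsim\delta$. Since the $f_k$ have pairwise disjoint supports, $f_k\rightharpoonup 0$ weakly in $L^p(\lambda_1^p)$, so compactness would force $\mathcal{T}_{\mathcal{S},b}^\alpha f_k\to 0$ in norm, contradicting the uniform lower bound.

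The main obstacle I expect is in (2): ensuring that a single sparse family $\mathcal{S}$ simultaneously captures the three failure modes while the fractional factor $|Q_k|^{\alpha/n}$, which vanishes in the shrinking case and blows up in the expanding case, is properly rebalanced against the $A_{p,q}$ normalization of $f_k$. A careful use of a John--Nirenberg-type inequality adapted to $\mathrm{BMO}_\nu$, giving $|E_k^\pm|\gtrsim|Q_k|$ uniformly, should be needed so that the lower bound on $Q_k$ survives and is not swamped by tail contributions from ancestors of $Q_k$ in $\mathcal{S}$. In (1), the corresponding delicate point is handling the large-scale tail of the sparse sum, where cubes can overlap substantially in weight and naive pointwise bounds lose information.
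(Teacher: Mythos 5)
Your high-level strategy is close to the paper's in spirit, but there are genuine gaps in both directions that the paper resolves with specific machinery you do not supply.

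For (1), the decomposition of $\mathcal{S}$ into a ``bad'' part with small norm and a ``good'' part that is compact is indeed the paper's strategy (with a fixed cube $Q_N$, a small scale $\delta$, and cubes split according to $Q\supset Q_N$, $Q\cap Q_N=\emptyset$, $Q\subset Q_N$ with $l_Q<\delta$, and $Q\subset Q_N$ with $\delta<l_Q<N$). However, the paper does not need a Fr\'echet--Kolmogorov criterion for the good part: the surviving cubes $Q\subset Q_N$ with $\delta<l_Q<N$ are finitely many, and each term of the sparse sum is rank one, so the good part is a finite-rank operator and hence automatically compact. More seriously, you dismiss the remaining three families as ``pointwise estimate plus the off-diagonal sparse bound gives $O(\epsilon)$,'' but the family of large cubes $Q\supset Q_N$ is precisely where a naive pointwise bound fails, because the oscillation of $b$ on a large cube is not controlled by the $\mathrm{VMO}_\nu$ hypothesis on $Q$ alone. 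The paper splits this family into four pieces $I_1,\dots,I_4$ along the chain $Q=Q_1\supset Q_2\supset\cdots\supset Q_N$, telescopes the averages $|\langle b\rangle_Q-\langle b\rangle_{Q_{i,k}}|$ through the chain, and controls the resulting geometric sums using the $A_{p,q}$ doubling property \eqref{Ap property}, Lemma \ref{lemma2.3}, and the weighted sparse bounds \eqref{sparse boundedness1}--\eqref{sparse boundedness2}. You flag this as a ``delicate point'' but do not address it, and it is the bulk of the proof.

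For (2), the weak-convergence argument you outline is attractive and, if executable, would be cleaner than the paper's indirection through $M_\alpha^b$ and $[b,I_\alpha]$ followed by the $\ell^{q'}\setminus\ell^1$-sequence contradiction. But two steps are not correct as stated. First, ``pass to a pairwise disjoint subsequence'' fails in the shrinking failure mode: if the cubes $Q_k$ all shrink to a common point, they are nested, and no subsequence is disjoint. The paper avoids this by pairing each ball $B_j$ with a \emph{disjoint} auxiliary ball $\tilde B_j$ at distance $\lesssim r_j$, placing the test function $f_j$ on $\tilde F_{j,1}\subset\tilde B_j$ after deleting later balls $\tilde B_l$ (using the fast decay $4r_{j+1}\le r_j$) to guarantee disjoint supports; the oscillation of $b$ is then measured via the median value $m_b(\tilde B_j)$ rather than $\langle b\rangle_{Q_k}$. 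Second, passing from the $L^1$-oscillation lower bound $\frac{1}{\nu(Q_k)}\int_{Q_k}|b-\langle b\rangle_{Q_k}|\ge\delta$ to $\|\mathcal{T}_{\mathcal{S},b}^\alpha f_k\|_{L^q(\lambda_2^q)}\gtrsim\delta$ is not a ``standard $A_{p,q}$ computation''; it requires the nontrivial estimate $\frac{1}{\nu(Q)}\lesssim\frac{|Q|^{\alpha/n}}{\lambda_1^p(Q)^{1/p}\lambda_2^{-q'}(Q)^{1/q'}}$, which the paper isolates as Lemma \ref{new lemma} and proves using $p<q$, $\nu\in A_2$, and the $A_{p,q}$ condition. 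Without this lemma, your test functions cannot be normalized so that both $\|f_k\|_{L^p(\lambda_1^p)}\sim 1$ and the lower bound in $L^q(\lambda_2^q)$ survive. (Your suggested choice $f_k\propto\chi_{E_k^-}\lambda_1^{-p}$ is also not the right normalization; a constant multiple of an indicator is what is used.) The John--Nirenberg heuristic you mention at the end is not what the paper uses either: the replacement is the median-value construction with $|F_{j,1}|,|F_{j,2}|\ge\frac12|\tilde B_j|$, which avoids John--Nirenberg entirely.
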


Next, we consider the commutator of fractional maximal operator $M_\alpha^b$, which is defined by
\begin{align*}
M_\alpha^b(f)(x)=\sup_{B\ni x}\frac{1}{|B|^{1-\alpha/n}}\int_B|b(x)-b(y)||f(y)|dy.
\end{align*}
Segovia and Torrea \cite{ST} proved that
\begin{align*}
\|M_\alpha^b\|_{L^p(\lambda_1^p)\rightarrow L^q(\lambda_2^q)}\simeq\|b\|_{\rm{BMO}_\nu},
\end{align*}
where $\alpha\in(0,n)$, $1<p<n/\alpha$, $1/p-1/q=\alpha/n$, $\lambda_1,\lambda_2\in A_{p,q}$ and $\nu=\lambda_1/\lambda_2$. Our second result extends this result to the compactness one.
\begin{theorem}\label{theorem1.2}
Let $\alpha\in(0,n)$, $1<p<n/\alpha$, $1/p-1/q=\alpha/n$. Assume that $\lambda_1,\lambda_2\in A_{p,q}$, $\nu=\lambda_1/\lambda_2$ and $b\in L_{\rm{loc}}^{1}(\mathbb{R}^n)$.
\begin{itemize}
\item [(1)]If $b\in \rm{VMO}_\nu(\mathbb{R}^n)$, then $M_\alpha^b$ is compact from $L^p(\lambda_1^p)$ to $L^q(\lambda_2^q)$;
\item [(2)]If $M_\alpha^b$ is compact from $L^p(\lambda_1^p)$ to $L^q(\lambda_2^q)$, then $b\in \rm{VMO}_\nu(\mathbb{R}^n)$.
  \end{itemize}
\end{theorem}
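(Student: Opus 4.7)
The plan is to follow the two-part template of Theorem~\ref{theorem1.1}, pairing the Segovia--Torrea boundedness $\|M_\alpha^b\|_{L^p(\lambda_1^p)\to L^q(\lambda_2^q)}\lesssim\|b\|_{{\rm BMO}_\nu}$ with the three-regime vanishing-oscillation characterization of ${\rm VMO}_\nu(\mathbb R^n)$ (smallness of the $\nu$-normalized mean oscillation as $r(B)\to 0$, as $r(B)\to\infty$, and as $B$ drifts to spatial infinity).

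For part~(1), I would first approximate $b\in{\rm VMO}_\nu$ by bounded, compactly supported functions $b_\epsilon$ with $\|b-b_\epsilon\|_{{\rm BMO}_\nu}\to 0$, obtained by truncating $b$ in value, cutting off spatially via a smooth partition, and mollifying in scale; each reduction is controlled in ${\rm BMO}_\nu$-norm by the corresponding vanishing condition. The subadditivity $|M_\alpha^b f - M_\alpha^{b_\epsilon} f|\le M_\alpha^{b-b_\epsilon} f$ combined with Segovia--Torrea then gives
\begin{equation*}
\|M_\alpha^b - M_\alpha^{b_\epsilon}\|_{L^p(\lambda_1^p)\to L^q(\lambda_2^q)}\lesssim \|b-b_\epsilon\|_{{\rm BMO}_\nu}\to 0,
\end{equation*}
so it suffices to show each $M_\alpha^{b_\epsilon}$ is compact. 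For this I would apply the weighted Fréchet--Kolmogorov--Riesz criterion in $L^q(\lambda_2^q)$: verify uniform boundedness of $\{M_\alpha^{b_\epsilon} f:\|f\|_{L^p(\lambda_1^p)}\le 1\}$ (from Segovia--Torrea), uniform tail decay as $R\to\infty$ (from the compact support of $b_\epsilon$ together with $A_{p,q}$-weighted fractional-maximal estimates), and translation equicontinuity as $|h|\to 0$ (from boundedness and smoothness of $b_\epsilon$). Since uniform limits of compact sublinear operators are compact, this closes part~(1).

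For part~(2), I would argue by contradiction. If $b\notin{\rm VMO}_\nu$, some vanishing condition fails, yielding $\delta>0$ and balls $\{B_k\}$ along the failing regime with $\frac{1}{\nu(B_k)}\int_{B_k}|b(y)-\langle b\rangle_{B_k}|\,dy\ge\delta$. I would then set $f_k=c_k\chi_{B_k}$ with $c_k$ normalized so that $\|f_k\|_{L^p(\lambda_1^p)}=1$; the geometric behavior of the balls (shrinking, blowing up, or escaping to infinity) together with the $A_p$ property of $\lambda_1^p$ gives $f_k\rightharpoonup 0$ weakly in $L^p(\lambda_1^p)$. The pointwise lower bound
\begin{equation*}
M_\alpha^b f_k(x)\ge c_k\,|B_k|^{\alpha/n}\,|b(x)-\langle b\rangle_{B_k}|\,\chi_{B_k}(x),
\end{equation*}
obtained by testing the defining supremum against $B_k$ itself, combined with a Hölder step to pass from the $\nu$-weighted oscillation to the $\lambda_2^q$-weighted norm and with the $A_{p,q}$ relations linking $\lambda_1^p(B_k)$, $\lambda_2^q(B_k)$, and $\nu(B_k)$ to $|B_k|^{\alpha/n}$, yields $\|M_\alpha^b f_k\|_{L^q(\lambda_2^q)}\gtrsim\delta$. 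This contradicts the fact that compactness together with $f_k\rightharpoonup 0$ would force $M_\alpha^b f_k\to 0$ in norm.

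The main obstacle is the translation equicontinuity step for the sublinear operator $M_\alpha^{b_\epsilon}$: the supremum defining $M_\alpha$ precludes the direct linear smoothing argument available for $\mathcal{T}^\alpha_{\mathcal{S},b}$, so one has to decompose the sup into small- and large-radius contributions and control each via the modulus of continuity of $b_\epsilon$ and an $A_{p,q}$-weighted tail estimate. A secondary subtlety appears in the spatial-infinity regime of part~(2): the Lacey--Li observation that $C_c^\infty\not\subset{\rm BMO}_\nu$ when $n\ge 2$ forces the test sequence to be built intrinsically from normalized indicators of the failing balls, and the Hölder passage must be tuned so that the $\nu$-oscillation lower bound genuinely survives through the $\lambda_2^q$-weighted norm.
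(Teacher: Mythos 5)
Your part~(2) is essentially on the right track, but your part~(1) contains a fatal gap that the paper explicitly warns against.

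\textbf{Part (1).} Your plan to approximate $b\in{\rm VMO}_\nu(\mathbb{R}^n)$ by bounded, compactly supported $b_\epsilon$ with $\|b-b_\epsilon\|_{{\rm BMO}_\nu}\to 0$ is precisely what \emph{fails} in the Bloom setting. As recalled in Section~1 and Section~2.3 (quoting Lacey--Li), ${\rm VMO}_\nu(\mathbb{R}^n)$ is \emph{not} the closure of $C_c^\infty(\mathbb{R}^n)$ under $\|\cdot\|_{{\rm BMO}_\nu}$ when $n\ge 2$; indeed $C_c^\infty$ need not even lie inside ${\rm BMO}_\nu$. You cite this fact yourself, but only as a ``secondary subtlety'' for part~(2) --- in fact it demolishes the backbone of your part~(1), since your truncate-and-mollify $b_\epsilon$ need not belong to ${\rm BMO}_\nu$, so the estimate $\|M_\alpha^b-M_\alpha^{b_\epsilon}\|\lesssim\|b-b_\epsilon\|_{{\rm BMO}_\nu}$ is vacuous. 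The paper takes a completely different route: it observes the pointwise chain
\begin{equation*}
M_\alpha^b f(x)\ \le\ \int_{\mathbb{R}^n}\frac{|b(x)-b(y)|\,|f(y)|}{|x-y|^{n-\alpha}}\,dy\ \lesssim\ \sum_{j=1}^{3^n}\Big(\mathcal{T}_{\mathcal{S}_j,b}^\alpha(|f|)(x)+(\mathcal{T}_{\mathcal{S}_j,b}^\alpha)^*(|f|)(x)\Big)
\end{equation*}
and reduces everything to Theorem~\ref{theorem1.1}(1), whose proof works directly with the three vanishing-oscillation conditions of Definition~\ref{weighted VMO} (splitting the sparse sum over cubes according to $l_Q<\delta$, $l_Q>N$, and $Q\cap Q_N=\emptyset$) and never needs to approximate $b$ in ${\rm BMO}_\nu$-norm by nice functions. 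That scale/location decomposition of the sparse sum is the structural replacement for the approximation argument that the weighted setting forbids.

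\textbf{Part (2).} Your pointwise lower bound $M_\alpha^b f_k(x)\ge c_k\,|B_k|^{\alpha/n}\,|b(x)-\langle b\rangle_{B_k}|\,\chi_{B_k}(x)$, followed by H\"older in the $\lambda_2^q$-norm and the $A_{p,q}$ relation $\frac{1}{\nu(B)}\lesssim\frac{|B|^{\alpha/n}}{\lambda_1^p(B)^{1/p}\lambda_2^{-q'}(B)^{1/q'}}$ (this is exactly Lemma~\ref{new lemma}), is a clean and arguably simpler way to extract $\|M_\alpha^b f_k\|_{L^q(\lambda_2^q)}\gtrsim\delta$ than the paper's median-value/companion-ball construction, and I think it goes through. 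But your final step --- ``compactness together with $f_k\rightharpoonup 0$ forces $M_\alpha^b f_k\to 0$'' --- is not justified for a \emph{sublinear} operator. For a compact \emph{linear} map $T$, weak convergence of $f_k$ implies norm convergence of $Tf_k$ to $T(0)=0$; for $M_\alpha^b$, compactness only gives a norm-convergent subsequence $M_\alpha^b f_{k_l}\to h$, and nothing forces $h=0$. The paper avoids this by arranging the $f_j$ to be disjointly supported (via the companion balls $\widetilde B_j$ and the shrinking condition $4r_{j+1}\le r_j$), extracting a limit point $\psi$ with $\|\psi\|\gtrsim 1$, and then using the Lacey--Li ``$Te_j=v$ is unbounded'' device with a sequence $\{c_i\}\in\ell^{q'}\setminus\ell^1$ to reach the contradiction. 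You would need to replace your last sentence with an argument of that type (or any argument showing that the norm-limits of $M_\alpha^b f_{k_l}$ cannot all be positive), since the weak-to-strong principle you invoke is not available here.
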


The other definition of commutator of fractional maximal operator is given as follows:
\begin{align*}
[b,M_\alpha](f)(x)=b(x)M_\alpha f(x)-M_\alpha(bf)(x),~\alpha\in(0,n),
\end{align*}
where $M_\alpha$ is the fractional maximal operator defined by
$$M_\alpha f(x)=\sup_{B\ni x}\frac{1}{|B|^{1-\alpha/n}}\int_B|f(y)|dy.$$
$M_\alpha^b$ and $[b,M_\alpha]$ essentially differ from each other. For instance, $M_\alpha^b$ is a sublinear and positive operator, but it is not for $[b,M_\alpha]$.
As a corollary of Theorem \ref{theorem1.1}, we have the following two-weight compactness for $[b,M_\alpha]$.
\begin{corollary}\label{corollary1.3}
Let $\alpha\in(0,n)$, $1<p<n/\alpha$, $1/p-1/q=\alpha/n$ and $b$ be a non-negative locally integrable function. Assume that $\lambda_1,\lambda_2\in A_{p,q}$, $\nu=\lambda_1/\lambda_2$ and $b\in \rm{VMO}_\nu(\mathbb{R}^n)$, Then $[b,M_\alpha]$ is compact from $L^p(\lambda_1^p)$ to $L^q(\lambda_2^q)$.
\end{corollary}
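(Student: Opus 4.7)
The plan is to exploit the non-negativity of $b$ to establish a pointwise domination $|[b,M_\alpha]f(x)|\le M_\alpha^b(|f|)(x)$, and then to transfer the compactness of $M_\alpha^b$ from Theorem~\ref{theorem1.2} to $[b,M_\alpha]$.

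To verify the pointwise bound, assume $f\ge 0$ without loss of generality. For any ball $B\ni x$, the trivial inequality $M_\alpha(bf)(x)\ge \frac{1}{|B|^{1-\alpha/n}}\int_B b(y)f(y)\,dy$ yields
$$\frac{b(x)}{|B|^{1-\alpha/n}}\int_B f(y)\,dy - M_\alpha(bf)(x) \;\le\; \frac{1}{|B|^{1-\alpha/n}}\int_B (b(x)-b(y))\, f(y)\,dy;$$
taking $\sup_{B\ni x}$ gives $b(x)M_\alpha f(x)-M_\alpha(bf)(x)\le M_\alpha^b(f)(x)$. A symmetric argument using $b(x)M_\alpha f(x)\ge \frac{b(x)}{|B|^{1-\alpha/n}}\int_B f$ (valid because $b\ge 0$) yields $M_\alpha(bf)(x)-b(x)M_\alpha f(x)\le M_\alpha^b(f)(x)$, and combining the two produces the claimed pointwise bound. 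By Theorem~\ref{theorem1.2}, the set $\{M_\alpha^b(|f|):\|f\|_{L^p(\lambda_1^p)}\le 1\}$ is then precompact in $L^q(\lambda_2^q)$.

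To transfer this precompactness to $\{[b,M_\alpha]f:\|f\|_{L^p(\lambda_1^p)}\le 1\}$, I would verify the weighted Fr\'echet--Kolmogorov--Riesz criterion in $L^q(\lambda_2^q)$. Uniform $L^q$-boundedness and uniform tightness at infinity follow immediately from the pointwise bound. For translation-equicontinuity, I would approximate $b$ within $\rm{VMO}_\nu$ by a sequence $\{b_k\}$ of suitably regular, compactly supported non-negative functions, combine the pointwise continuity estimate
$$|([b,M_\alpha]-[b_k,M_\alpha])f(x)|\;\le\; |b-b_k|(x)\, M_\alpha f(x) + M_\alpha(|b-b_k|\,|f|)(x)$$
with the Segovia--Torrea operator-norm continuity $\|[b-b_k,M_\alpha]\|_{L^p(\lambda_1^p)\to L^q(\lambda_2^q)}\lesssim \|b-b_k\|_{\rm{BMO}_\nu}$, and invoke a direct translation-equicontinuity statement for each regularized commutator $[b_k,M_\alpha]$, obtained by adapting to $[b_k,M_\alpha]$ the Fr\'echet--Kolmogorov argument used for $M_\alpha^{b_k}$ inside the proof of Theorem~\ref{theorem1.2}.

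The main obstacle is precisely this translation-equicontinuity step: a pointwise bound $|Sf|\le T(|f|)$ with $T$ compact does not by itself transfer equicontinuity from $T$ to $S$, because $[b,M_\alpha]$ is not sublinear and the domination cannot be pushed through differences $f-g$. A further subtlety is that, as noted by Lacey and Li, $C_c^\infty(\mathbb{R}^n)$ need not lie in $\rm{BMO}_\nu$ for $n\ge 2$, so the sequence $\{b_k\}$ must be chosen intrinsically within $\rm{VMO}_\nu$, using the density structure provided by the definition of $\rm{VMO}_\nu$ adopted in the paper.
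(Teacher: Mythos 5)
Your pointwise bound $|[b,M_\alpha]f|\le M_\alpha^b(|f|)$ is exactly the one the paper uses, though the paper obtains it more directly: since $b(x)\ge0$ is a constant for fixed $x$, one has $b(x)M_\alpha f(x)=M_\alpha(b(x)f)(x)$, and then the sublinearity $|M_\alpha g-M_\alpha h|\le M_\alpha(|g-h|)$ gives $|[b,M_\alpha]f(x)|=|M_\alpha(bf)(x)-M_\alpha(b(x)f)(x)|\le M_\alpha\big(|b-b(x)|\,|f|\big)(x)=M_\alpha^bf(x)$. Your longer two-sided derivation is correct but the one-liner is cleaner. After this bound, the paper simply declares that the conclusion follows from Theorem~\ref{theorem1.2}(1), i.e.\ it treats pointwise domination by the compact operator $M_\alpha^b$ as sufficient to conclude compactness of $[b,M_\alpha]$. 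You are right to be uneasy here: a family of functions each dominated in absolute value by elements of a precompact set need not itself be precompact in $L^q$ (oscillation at a fixed scale is unconstrained by pointwise domination), so this transfer genuinely requires an argument, and the paper does not supply one. The same remark in fact applies to the paper's proof of Theorem~\ref{theorem1.2}(1) itself, which passes from compactness of the sparse operators of Theorem~\ref{theorem1.1} to compactness of $M_\alpha^b$ by pointwise domination alone.

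That said, your proposed repair does not close the gap. The step ``adapt to $[b_k,M_\alpha]$ the Fr\'echet--Kolmogorov argument used for $M_\alpha^{b_k}$ inside the proof of Theorem~\ref{theorem1.2}'' is circular in the context of this paper, because there is no Fr\'echet--Kolmogorov computation in the paper's proof of Theorem~\ref{theorem1.2}(1) to adapt: that proof is itself a domination argument. The genuinely missing ingredient, for either your argument or the paper's, is a translation-equicontinuity estimate for $[b,M_\alpha]$ (or for the truncated/regularized versions of it): one needs to control $[b,M_\alpha]f(\cdot+h)-[b,M_\alpha]f(\cdot)$ uniformly over the unit ball of $L^p(\lambda_1^p)$, and this difference is not bounded by $M_\alpha^b$ applied to a single function. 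A route that would actually work is to emulate the structure of the proof of Theorem~\ref{theorem1.1}(1): split the supremum defining $M_\alpha$ into ``medium balls in a bounded region'' versus the rest, show that the commutator with the medium-scale truncation is compact directly (e.g.\ via Arzel\`a--Ascoli, exploiting the off-diagonal smoothing), and control the remainder in operator norm by $\epsilon$ using \eqref{1}--\eqref{3}. Your proposal correctly identifies the weak point, but as written it leaves that same point open.
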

\begin{remark}
Theorem \ref{theorem1.1} and Corollary \ref{corollary1.3} are new even in unweighted setting.
\end{remark}

As we stated before, Liu, Wu and Yang \cite{LiWY} gave the following two weight compactness of commutator of fractional integral:
\begin{theorem}{\rm(cf. \cite{LiWY})}
Let $0<\alpha<n$, $1<p<n/\alpha$, $1/p-1/q=\alpha/n$ and $\lambda_1,\lambda_2\in A_{p,q}$. Suppose that
\begin{equation}\label{condition}
b\in
\overline{\bigcup_{\eta\in(1,r_\nu]}(\rm{BMO}_{\nu^\eta}(\mathbb{R}^n))
\cap\rm{CMO}(\mathbb{R}^n)}^{\rm{BMO}_\nu(\mathbb{R}^n)},
\end{equation}
where
$$\nu=\lambda_1/\lambda_2, r_\nu:=1+\frac{1}{2^{5+n}[\nu]_{A_2}}.$$
Then the commutator $[b,I_\alpha]$ is compact from $L^p(\lambda_1^p)$ to $L^q(\lambda_2^q)$.
\end{theorem}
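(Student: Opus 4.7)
The plan is to combine a density argument in $\rm{BMO}_\nu(\mathbb{R}^n)$ with a compactness extrapolation theorem in the Bloom setting, reducing the proof to two ingredients that are already in hand: the Bloom-type operator-norm bound of Accomazzo et al.\ for $[b,I_\alpha]$, and the unweighted compactness of $[b,I_\alpha]$ for $b\in\rm{CMO}(\mathbb{R}^n)$ established by Wu--Yang.

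First I would use the hypothesis \eqref{condition} to select a sequence $\{b_k\}$ with $b_k\in \rm{BMO}_{\nu^{\eta_k}}(\mathbb{R}^n)\cap\rm{CMO}(\mathbb{R}^n)$ for some $\eta_k\in(1,r_\nu]$ such that $\|b-b_k\|_{\rm{BMO}_\nu(\mathbb{R}^n)}\to 0$. Applying the Bloom two-weight bound
\[
\|[b-b_k,I_\alpha]\|_{L^p(\lambda_1^p)\to L^q(\lambda_2^q)}\lesssim \|b-b_k\|_{\rm{BMO}_\nu(\mathbb{R}^n)}
\]
then gives $[b_k,I_\alpha]\to[b,I_\alpha]$ in operator norm, and since the class of compact operators is closed under operator-norm limits, the task reduces to showing that each $[b_k,I_\alpha]$ is compact from $L^p(\lambda_1^p)$ to $L^q(\lambda_2^q)$. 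For a fixed $b_0:=b_k$, the fact that $b_0\in\rm{CMO}(\mathbb{R}^n)$ yields the unweighted compactness $[b_0,I_\alpha]\colon L^{p_0}(\mathbb{R}^n)\to L^{q_0}(\mathbb{R}^n)$ for any admissible pair $(p_0,q_0)$ via the Uchiyama--Wu--Yang theorem. This single-weight compactness is then transferred to the two-weight target through a compactness extrapolation theorem whose input is uniform Bloom boundedness across a one-parameter family of $A_{r,s}$ pairs compatible with $\nu^{\eta_k}$; the requirement $\eta_k\in(1,r_\nu]$ is precisely what keeps $\nu^{\eta_k}\in A_2$ via the sharp reverse H\"older inequality that drives the Rubio de Francia style machinery.

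The main obstacle is the compactness extrapolation step itself, since one must upgrade the classical Rubio de Francia operator-norm extrapolation to a version that preserves the qualitative property of compactness. The standard route is to verify a Fr\'echet--Kolmogorov criterion directly in the weighted setting---equicontinuity under translation, smallness at infinity, and uniform integrability of the image---by combining the smoothness and decay of elements of $\rm{CMO}$ with the Bloom bound applied to truncations and to translation symbols such as $\tau_h b_0-b_0$. Once these three moduli are shown to tend to zero uniformly on bounded subsets of $L^p(\lambda_1^p)$, the desired compactness of $[b_0,I_\alpha]$ follows, and the density limit then closes the argument.
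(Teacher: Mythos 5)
The paper does not prove this theorem; it is quoted from \cite{LiWY}, so the comparison has to be against the approach of that reference, which is indeed the density-plus-extrapolation-of-compactness route you describe.

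Your high-level reduction is correct and matches \cite{LiWY}: choose $b_k\in\rm{BMO}_{\nu^{\eta_k}}(\mathbb{R}^n)\cap\rm{CMO}(\mathbb{R}^n)$ converging to $b$ in $\rm{BMO}_\nu(\mathbb{R}^n)$; the Bloom norm inequality of \cite{AMPRR} converts this to operator-norm convergence $[b_k,I_\alpha]\to[b,I_\alpha]$; compactness is a norm-closed property, so it suffices to handle a fixed $b_0$. However, the final and only nontrivial step is where the proposal drifts. First, your explanation of the constraint $\eta_k\in(1,r_\nu]$ misses the point: $\eta_k\le r_\nu$ keeps $\nu^{\eta_k}\in A_2$ (reverse H\"older), but if the only goal were $\nu^{\eta_k}\in A_2$ you could take $\eta_k=1$. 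The strictness $\eta_k>1$ is exactly what the extrapolation-of-compactness machinery needs: $b_0\in\rm{BMO}_{\nu^{\eta_0}}$ with $\eta_0>1$ yields a Bloom bound with a weight \emph{strictly more singular} than $\nu$, so that the target tuple $(p,q,\lambda_1,\lambda_2)$ sits in the interior of a family of boundedness data. Coupled with compactness at one endpoint (the unweighted case, from $b_0\in\rm{CMO}$), a Cwikel-type interpolation-of-compactness theorem then delivers compactness on $L^p(\lambda_1^p)\to L^q(\lambda_2^q)$. Second, your final paragraph proposes instead to run a weighted Fr\'echet--Kolmogorov argument directly for $b_0$. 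That is not the mechanism in \cite{LiWY}, and more tellingly it is internally inconsistent with your setup: a direct Fr\'echet--Kolmogorov verification for $b_0\in\rm{CMO}(\mathbb{R}^n)\cap\rm{BMO}_\nu(\mathbb{R}^n)$ (say by smoothing $b_0$ and truncating the kernel, as in Clop--Cruz/Wu--Yang) would not use, and would not naturally explain, the hypothesis $\eta_0>1$ at all, which leaves the role of that hypothesis unaccounted for in your argument. Either commit to the extrapolation/interpolation route (in which case invoke the Hyt\"onen--Lappas type interpolation-of-compactness statement and explain how $\eta_0>1$ creates the needed slack), or commit to a direct Fr\'echet--Kolmogorov proof (in which case you should carry out the three moduli estimates concretely and explain which weaker hypothesis they actually require), but the present sketch conflates the two and leaves the decisive step unproven.
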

Liu et al. \cite{LiWY} only proved that the condition \eqref{condition} is equivalent to $b\in\rm{VMO}_\nu(\mathbb{R}^n)$ when $n=1$. Hence, it is natural to consider that whether $b\in\rm{VMO}_\nu(\mathbb{R}^n)$ is a sufficient condition for the two weight compactness of $[b,I_\alpha]$, and the converse is true or not? Our next theorem is to settle this question.

\begin{theorem}\label{theorem1.6}
Let $\alpha\in(0,n)$, $1<p<n/\alpha$, $1/p-1/q=\alpha/n$. Assume that $\lambda_1,\lambda_2\in A_{p,q}$, $\nu=\lambda_1/\lambda_2$ and $b\in L_{\rm{loc}}^{1}(\mathbb{R}^n)$.
\begin{itemize}
\item [(1)]If $b\in \rm{VMO}_\nu(\mathbb{R}^n)$, then $[b,I_\alpha]$ is compact from $L^p(\lambda_1^p)$ to $L^q(\lambda_2^q)$;
\item [(2)]If $[b,I_\alpha]$ is compact from $L^p(\lambda_1^p)$ to $L^q(\lambda_2^q)$, then $b\in \rm{VMO}_\nu(\mathbb{R}^n)$.
  \end{itemize}
\end{theorem}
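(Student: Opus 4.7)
The plan is to combine the pointwise sparse domination of Accomazzo--Martell--P\'erez--Rela--Rivera-R\'ios,
\[
|[b,I_\alpha]f(x)|\lesssim \sum_{j=1}^{3^n}\bigl(\mathcal{T}_{\mathcal{S}_j,b}^\alpha(|f|)(x)+(\mathcal{T}_{\mathcal{S}_j,b}^\alpha)^\ast(|f|)(x)\bigr),
\]
with Theorem \ref{theorem1.1}. Since a pointwise bound by a compact operator does not of itself imply compactness of the dominated operator, I will verify the weighted Fr\'echet--Kolmogorov--Riesz compactness criterion for the image of the unit ball of $L^p(\lambda_1^p)$ under $[b,I_\alpha]$ in $L^q(\lambda_2^q)$, and for the necessity half run a direct test-sequence argument against each of the three vanishing conditions characterizing $\mathrm{VMO}_\nu$.

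For part (1), the uniform norm bound comes from the two-weight estimate $\|[b,I_\alpha]\|_{L^p(\lambda_1^p)\to L^q(\lambda_2^q)}\lesssim \|b\|_{\mathrm{BMO}_\nu}$ of \cite{AMPRR}. Tightness at infinity, $\lim_{R\to\infty}\sup_{\|f\|_{L^p(\lambda_1^p)}\le 1}\|\chi_{\{|x|>R\}}[b,I_\alpha]f\|_{L^q(\lambda_2^q)}=0$, follows at once from the pointwise sparse domination combined with the tightness enjoyed by the sparse operators, which is a consequence of their compactness (Theorem \ref{theorem1.1}; the adjoint piece $(\mathcal{T}_{\mathcal{S},b}^\alpha)^\ast$ is handled by the dual version of Theorem \ref{theorem1.1} on $L^{q'}(\lambda_2^{-q'})\to L^{p'}(\lambda_1^{-p'})$, noting that $b\in \mathrm{VMO}_\nu$ passes to the analogous hypothesis on the dual side). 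The bulk of the work is the translation equicontinuity
\[
\lim_{|h|\to 0}\sup_{\|f\|_{L^p(\lambda_1^p)}\le 1}\|[b,I_\alpha]f(\cdot+h)-[b,I_\alpha]f(\cdot)\|_{L^q(\lambda_2^q)}=0.
\]
I plan to split the difference as $(b(x+h)-b(x))\,I_\alpha f(x+h)$ plus a kernel-smoothness commutator $\int (b(x)-b(y))[k_\alpha(x+h-y)-k_\alpha(x-y)]f(y)\,dy$, then control each piece after a layered truncation of $b$ against the three vanishing properties (small-scale, large-scale, and spatial-infinity) that define $\mathrm{VMO}_\nu$, reducing to sparse bounds already known to be compact by Theorem \ref{theorem1.1}.

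For part (2) I proceed by contrapositive: if $b\in\mathrm{BMO}_\nu\setminus\mathrm{VMO}_\nu$, the three-condition characterization supplies cubes $\{B_k\}$, either with $|B_k|\to 0$, or $|B_k|\to\infty$, or with centers escaping to infinity, on which $\frac{1}{\nu(B_k)}\int_{B_k}|b-\langle b\rangle_{B_k}|\gtrsim 1$. Testing $[b,I_\alpha]$ against normalized indicators $f_k=\chi_{E_k}/\|\chi_{E_k}\|_{L^p(\lambda_1^p)}$ of suitable translates $E_k$ of $B_k$ at comparable distance, the non-degeneracy of the kernel $|x-y|^{\alpha-n}$ yields a pointwise lower bound of the form $|[b,I_\alpha]f_k(x)|\gtrsim |b(x)-c_{B_k}|\cdot|B_k|^{\alpha/n}\cdot|B_k|^{-1}\cdot\|\chi_{E_k}\|_{L^p(\lambda_1^p)}^{-1}\chi_{B_k}(x)$ for a suitable constant $c_{B_k}$. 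Taking the $L^q(\lambda_2^q)$ norm produces $\|[b,I_\alpha]f_k\|_{L^q(\lambda_2^q)}\gtrsim 1$ uniformly, and by construction the $\{f_k\}$ have mutually well-separated supports (in scale or in space), so $\{[b,I_\alpha]f_k\}$ admits no Cauchy subsequence, contradicting compactness.

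The principal technical hurdle I anticipate is the translation equicontinuity in part (1). The Bloom weight $\nu$ does not cooperate simply with translations, and in contrast to the classical unweighted Uchiyama setting, $\|\tau_h b-b\|_{\mathrm{BMO}_\nu}$ need not tend to $0$ as $|h|\to 0$ even when $b\in\mathrm{VMO}_\nu$. The vanishing must therefore be extracted directly from the structural properties of $\mathrm{VMO}_\nu$ via cutoffs, coupling the resulting pieces with the compact sparse operator estimates of Theorem \ref{theorem1.1} rather than approximating $b$ in $\mathrm{BMO}_\nu$ norm by smooth truncates.
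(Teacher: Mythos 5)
Your proposal diverges from the paper in both halves, and in each half there is a genuine gap that the paper's argument fills by a different device.

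\textbf{Part (1).} The paper's proof is short: it cites the AMPRR pointwise sparse domination
\[
|[b,I_\alpha]f|\lesssim\sum_{j=1}^{3^n}\Bigl(\mathcal{T}_{\mathcal{S}_j,b}^\alpha(|f|)+(\mathcal{T}_{\mathcal{S}_j,b}^\alpha)^\ast(|f|)\Bigr)
\]
and appeals directly to Theorem~\ref{theorem1.1}(1). The point it implicitly uses is that the proof of Theorem~\ref{theorem1.1}(1) produces, for each $\epsilon>0$, a decomposition of every sparse operator as (a positive operator valued in a \emph{fixed} finite-dimensional subspace $\mathrm{span}\{\chi_Q:\ Q\subset Q_N,\ \delta<l_Q<N\}$) plus (a positive operator of norm $\lesssim\epsilon$), uniformly over sparse families; that uniformity is what lets the domination propagate. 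You are right that a naked pointwise domination by a compact positive operator does not by itself give compactness of the dominated linear operator, and your instinct to reach for a Fr\'echet--Kolmogorov verification is legitimate. But your verification stalls exactly where you anticipate: for the translation equicontinuity, the term $(b(x+h)-b(x))\,I_\alpha f(x+h)$ has no path to smallness --- $b(\cdot+h)-b$ is not small in any norm that couples to $I_\alpha f$, even for $b\in\mathrm{VMO}_\nu$, and the ``layered truncation of $b$'' you invoke is not available in the Bloom setting (you yourself note that $C_c^\infty$ is not dense in $\mathrm{BMO}_\nu$). As written, part (1) is a plan with its central step unresolved, whereas the paper sidesteps translation entirely by working the finite-rank-plus-small decomposition through the sparse cubes.

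\textbf{Part (2).} Here the proposal is substantively incomplete in two places. First, your claimed pointwise lower bound $|[b,I_\alpha]f_k(x)|\gtrsim|b(x)-c_{B_k}|\,|B_k|^{\alpha/n-1}\,\|\chi_{E_k}\|_{L^p(\lambda_1^p)}^{-1}\chi_{B_k}(x)$ requires that $b(x)-b(y)$ have a fixed sign for $x\in B_k$ and $y$ in the test set, which is not automatic. The paper arranges this via the median value $m_b(\tilde B_j)$ of Lerner--Ombrosi--Rivera-R\'ios, splitting $B_j$ and $\tilde B_j$ into $E_{j,1},E_{j,2}$ and $F_{j,1},F_{j,2}$ so that $b(x)-b(y)$ has one sign on $E_{j,1}\times F_{j,1}$ and the opposite sign on $E_{j,2}\times F_{j,2}$, and it also trims $F_{j,i}$ to $\tilde F_{j,i}$ using the decay $4r_{j+1}\le r_j$ so that later test sets do not contaminate earlier ones; none of this appears in your sketch. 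Second, and more seriously, the inference ``the $f_k$ have mutually well-separated supports, so $\{[b,I_\alpha]f_k\}$ admits no Cauchy subsequence'' is unjustified: disjointness of the inputs does not prevent the outputs from clustering, since $I_\alpha$ smears supports. The paper instead uses the Lacey--Li $\ell^1$-summation argument: if a subsequence $[b,I_\alpha]f_{j_i}\to\psi$ with $\|\psi\|_{L^q(\lambda_2^q)}\gtrsim1$, then for a positive sequence $\{c_i\}$ with $\sum c_i=\infty$ but $\{c_i\}$ summable enough that $\phi=\sum_ic_if_{j_i}\in L^p(\lambda_1^p)$ (the supports are disjoint), one gets
\[
[b,I_\alpha]\phi=\sum_i c_i\,[b,I_\alpha]f_{j_i}=\Bigl(\sum_i c_i\Bigr)\psi+\sum_ic_i\bigl([b,I_\alpha]f_{j_i}-\psi\bigr),
\]
where the last sum converges in $L^q(\lambda_2^q)$ while $(\sum_ic_i)\psi$ is infinite on a set of positive measure, a contradiction. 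That device is what actually forecloses a convergent subsequence; your proposal does not supply a substitute for it.
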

By Theorems \ref{theorem1.1}, \ref{theorem1.2} and \ref{theorem1.6}, we have the following corollary.
\begin{corollary}\label{corollary1.7}
Let $\alpha\in(0,n)$, $1<p<n/\alpha$, $1/p-1/q=\alpha/n$. Assume that $\lambda_1,\lambda_2\in A_{p,q}$, $\nu=\lambda_1/\lambda_2$ and $b\in \rm{BMO}_\nu(\mathbb{R}^n)$. Then the following statements are equivalent:
\begin{itemize}
\item [(1)]For any $\eta\in(0,1)$ and each $\eta$-sparse family $\mathcal{S}$, $\mathcal{T}_{\mathcal{S},\alpha}^b$ is compact from $L^p(\lambda_1^p)$ to $L^q(\lambda_2^q)$;
\item [(2)]$[b,I_\alpha]$ is compact from $L^p(\lambda_1^p)$ to $L^q(\lambda_2^q)$;
\item [(3)]$M_\alpha^b$ is compact from $L^p(\lambda_1^p)$ to $L^q(\lambda_2^q)$;
\item [(4)]$b\in\rm{VMO}_\nu(\mathbb{R}^n)$.
  \end{itemize}
\end{corollary}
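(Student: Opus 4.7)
The plan is straightforward: Corollary \ref{corollary1.7} is a packaging of Theorems \ref{theorem1.1}, \ref{theorem1.2}, and \ref{theorem1.6}, each of which asserts a two-sided equivalence between one of the compactness statements (1), (2), (3) and the membership statement (4). So my strategy is to verify the three pairwise equivalences separately and then chain them through (4).

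First I would observe that all three theorems share the common hypotheses of the corollary (namely $\alpha \in (0,n)$, $1<p<n/\alpha$, $1/p-1/q=\alpha/n$, $\lambda_1,\lambda_2\in A_{p,q}$, $\nu=\lambda_1/\lambda_2$), and that the assumption $b\in \mathrm{BMO}_\nu(\mathbb{R}^n)$ is either explicit (in Theorem \ref{theorem1.1}) or automatic from the other theorems (since $\mathrm{VMO}_\nu\subset \mathrm{BMO}_\nu$, and the compactness sides all force $b\in\mathrm{BMO}_\nu$ via the known norm-to-BMO lower bounds of Bloom/Segovia–Torrea/Accomazzo et al.). Hence the corollary's standing hypothesis $b\in \mathrm{BMO}_\nu$ is compatible with invoking each theorem directly.

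Then I would carry out the three implications in turn. By Theorem \ref{theorem1.1}(1) and (2), condition (1) of the corollary is equivalent to (4). By Theorem \ref{theorem1.6}(1) and (2), condition (2) of the corollary is equivalent to (4). By Theorem \ref{theorem1.2}(1) and (2), condition (3) of the corollary is equivalent to (4). Combining, (1) $\Leftrightarrow$ (4) $\Leftrightarrow$ (2) and (4) $\Leftrightarrow$ (3), which closes the loop.

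There is essentially no obstacle: the corollary requires no new analysis beyond quoting the three theorems. The only mild subtlety worth flagging in the write-up is to make sure that the "only if" directions in Theorems \ref{theorem1.1}(2), \ref{theorem1.2}(2), \ref{theorem1.6}(2) are stated with exactly the same quantifier on the sparse family $\mathcal{S}$ as in condition (1) of the corollary (namely, "for every $\eta\in(0,1)$ and every $\eta$-sparse family $\mathcal{S}$"), so that the logical chain proceeds without gap. Once this is noted, the proof reduces to a one-line citation of each theorem, which is exactly what one expects from a corollary.
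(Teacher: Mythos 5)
Your proposal is correct and follows exactly the paper's approach: Corollary~\ref{corollary1.7} is obtained by chaining the pairwise equivalences (1)$\Leftrightarrow$(4), (2)$\Leftrightarrow$(4), (3)$\Leftrightarrow$(4) coming from Theorems~\ref{theorem1.1}, \ref{theorem1.6}, \ref{theorem1.2} respectively, and the paper's own justification is a one-line citation of those three theorems. Your remark on the matching quantifiers over sparse families is a sensible sanity check, and indeed the ``for every $\eta$ and every $\eta$-sparse family'' quantification in Theorem~\ref{theorem1.1}(2) agrees with condition (1) of the corollary.
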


\begin{remark}\label{r-heat and poisn}
To the best of our knowledge, the necessity of the two weight compactness of commutator of Riesz potential and the characterization of two weight compactness of commutator of fractional maximal operator are new results.
\end{remark}

The rest of the paper is organized as follows. In Section 2, we recall some necessary facts that will be used. The proofs of $(1)$ of Theorems \ref{theorem1.1}, \ref{theorem1.2}, \ref{theorem1.6} and Corollary \ref{corollary1.3} will be given in Sections 3. In Section 4, we give the proofs of (2) in Theorems \ref{theorem1.1}, \ref{theorem1.2}, \ref{theorem1.6}. We remark that our works and ideas are motivated by \cite{ChLLV,LL} but with some differences due to the off-diagonal case we consider.

We end this section by making some conventions. Denote $f\lesssim g$, $f\thicksim g$ if $f\leq Cg$ and $f\lesssim g \lesssim f$, respectively.  For any ball $B:=B(x_0,r_B)\subset \mathbb{R}^n$, $x_0$ and $r_B$ are the center and the radius of the ball $B$, respectively. $\langle f\rangle_B$ means the mean value of $f$ over $B$, $\chi_B$ represents the characteristic function of $B$, $\int_B\omega(y)dy$ is denoted by $\omega(B)$.
\section{Preliminaries}
In this section, we recall some basic facts that will be used.
\subsection{Weight}
A weight $\omega$ is a non-negative locally integrable function on $\mathbb{R}^n$. For $1<p<q<\infty$, we say that $\omega\in A_{p,q}$ if
\begin{align*}
[\omega]_{A_{p,q}}:=\sup_Q\Big(\frac{1}{|Q|}\int_Q\omega(y)^qdy\Big)
\Big(\frac{1}{|Q|}\int_Q\omega(y)^{-p'}dy\Big)^{q/p'}<\infty,
\end{align*}
where $1/q+1/q'=1$ and the supremum is taken over all cubes $Q\subset \mathbb{R}^n$. It is easy to see that $\omega^p\in A_p$ and $\omega^q\in A_q$, where $A_p(1<p<\infty)$ is the classical Muckenhoupt class, i.e., we say that $\omega\in A_p$ if
\begin{align*}
[\omega]_{A_p}:=\sup_{Q}\Big(\frac{1}{|Q|}\int_{Q}\omega(y)dy\Big)\Big(\frac{1}{|Q|}
\int_{Q}\omega(y)^{1-p'}dy\Big)^{p-1}<\infty,
\end{align*}
where $1/p+1/p'=1$ and the supremum is taken over all cubes $Q\subset \mathbb{R}^n$. It is well known that if $\omega\in A_p$, there are constants $C_1,C_2>0$ and $0<\sigma<1$ such that
\begin{align}\label{Ap property}
C_1\Big(\frac{|E|}{|B|}\Big)^p\leq\frac{\omega(E)}{\omega(B)}\leq C_2\Big(\frac{|E|}{|B|}\Big)^\sigma,
\end{align}
for any measurable subset $E$ of a ball $B$.
\subsection{Sparse operators}
Now we recall the definitions of dyadic lattice, sparse family and sparse operator; see, for example,  \cite{Ler,LOR}. Given a cube $Q\subset\mathbb{R}^n$, let $\mathcal{D}(Q)$ be the collection of cubes obtained by repeatedly subdividing $Q$ and its descendants into $2^n$ congruent subcubes.
\begin{definition}
A collection of cubes $\mathcal{D}$ is called a dyadic lattice if it satisfies the following properties:\\
$(1)$ if $Q\in\mathcal{D}$, then every child of $Q$ is also in $\mathcal{D}$;\\
$(2)$ for every two cubes $Q_1, Q_2\in\mathcal{D}$, there is a common ancestor $Q\in\mathcal{D}$ such that $Q_1, Q_2\in\mathcal{D}(Q)$;\\
$(3)$ for any compact set $K\subset\mathbb{R}^n$, there is a cube $Q\in\mathcal{D}$ such that $K\subset Q$.
\end{definition}

\begin{definition}
Given $\eta\in(0,1)$, a subset $\mathcal{S}\subset\mathcal{D}$ is called an $\eta$-sparse family provided that for each cube $Q\in\mathcal{S}$, there is a measurable subset $E_Q\subset Q$ such that $\eta|Q|\leq|E_Q|$, and the sets $\{E_Q\}_{Q\in\mathcal{S}}$ are mutually disjoint.
\end{definition}

Let $\mathcal S$ be a sparse family. Define the sparse operators $\mathcal{T}_{\mathcal{S}}$ and $\mathcal{T}_{\mathcal{S}}^\alpha$ by
\begin{align}\label{2.2}
\mathcal{T}_{\mathcal S}f(x):=\sum_{Q\in \mathcal S}\langle|f|\rangle_Q\chi_Q(x)
\end{align}
and
\begin{align}\label{2.3}
\mathcal{T}_{\mathcal{S}}^\alpha f(x):=\sum_{Q\in\mathcal{S}}\frac{1}{|Q|^{1-\alpha/n}}\int_Q|f|\chi_Q(x),
\end{align}
respectively. Then
\begin{equation}\label{sparse boundedness1}
\|\mathcal{T}_{\mathcal{S}}f\|_{L^p(\omega)}\lesssim[\omega]_{A_p}^
{\max\{1,\frac{1}{p-1}\}}\|f\|_{L^p(\omega)},~\omega\in A_p,~1<p<\infty,
\end{equation}
see \cite{Ler}. For $\alpha\in(0,n)$, $1<p<n/\alpha$, $1/p-1/q=\alpha/n$ and $\omega\in A_{p,q}$,
\begin{align}\label{sparse boundedness2}
\|\mathcal{T}_\mathcal{S}^\alpha\|_{L^p(\omega^p)\rightarrow L^q(\omega^q)}\lesssim[\omega]_{A_{p,q}}^{(1-\alpha/n)\max\{1,p/q'\}},
\end{align}
the proof of \eqref{sparse boundedness2} is implicit in \cite{LMPT}.

We recall a lemma that will be used.
\begin{lemma}{\rm (cf. \cite{LOR})}\label{lemma2.3}
Let $\mathcal{D}$ be a dyadic lattice and let $\mathcal{S}\subset\mathcal{D}$ be a $\eta$-sparse family. Assume that $b\in L_{\rm{loc}}^{1}(\mathbb{R}^n)$. Then there exists a $\frac{\eta}{2(1+\eta)}$-sparse family $\tilde{\mathcal{S}}\subset\mathcal{D}$ such that $\mathcal{S}\subset\tilde{\mathcal{S}}$ and for every cube $Q\in\tilde{\mathcal{S}}$,
$$|b(x)-\langle b\rangle_Q|\leq2^{n+2}\sum_{R\in\tilde{\mathcal{S}},R\subset Q}\frac{1}{|R|}\int_R|b(x)-\langle b\rangle_R|dx\chi_R(x)$$
for a.e. $x\in Q$.
\end{lemma}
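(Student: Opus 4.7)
The plan is to enrich the given sparse family $\mathcal S$ into a larger family $\tilde{\mathcal S}$ by adjoining Calderón–Zygmund–type stopping cubes adapted to $|b-\langle b\rangle_Q|$ inside each $Q \in \mathcal S$, and then check the pointwise bound and the sparsity constant separately.

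\emph{Step 1 (stopping construction).} For each $Q \in \mathcal S$, I would build a tree $\mathcal F(Q)$ rooted at $Q$ recursively: $\mathcal F_0(Q)=\{Q\}$, and given $P \in \mathcal F_k(Q)$, let $\mathrm{ch}(P)$ denote the collection of maximal cubes $R \in \mathcal D(P)$ with $R \subsetneq P$ satisfying
$$\frac{1}{|R|}\int_R |b-\langle b\rangle_P|\,dx \;>\; 2\,\langle |b-\langle b\rangle_P|\rangle_P.$$
Set $\mathcal F_{k+1}(Q)=\bigcup_{P \in \mathcal F_k(Q)}\mathrm{ch}(P)$, $\mathcal F(Q)=\bigcup_k \mathcal F_k(Q)$, and finally $\tilde{\mathcal S} := \mathcal S \cup \bigcup_{Q \in \mathcal S}\mathcal F(Q)$.

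\emph{Step 2 (pointwise bound).} Outside the first stopping generation, Lebesgue differentiation gives $|b(x)-\langle b\rangle_P| \le 2\langle|b-\langle b\rangle_P|\rangle_P$ a.e.\ on $P\setminus \bigcup \mathrm{ch}(P)$. Dyadic maximality of $\mathrm{ch}(P)$ forces $\langle|b-\langle b\rangle_P|\rangle_R \le 2^{n+1}\langle|b-\langle b\rangle_P|\rangle_P$ for every $R \in \mathrm{ch}(P)$, and in particular $|\langle b\rangle_R-\langle b\rangle_P| \le 2^{n+1}\langle|b-\langle b\rangle_P|\rangle_P$. For a.e.\ $x\in Q$, taking the chain $Q=P_0 \supsetneq P_1 \supsetneq \cdots$ of $\mathcal F(Q)$-cubes containing $x$ and telescoping
$$b(x)-\langle b\rangle_Q \;=\; (b(x)-\langle b\rangle_{P_m}) + \sum_{i=0}^{m-1}(\langle b\rangle_{P_{i+1}}-\langle b\rangle_{P_i})$$
(or its infinite analogue when the chain never terminates) combines the two estimates into
$$|b(x)-\langle b\rangle_Q| \;\le\; 2^{n+2}\sum_{R \in \mathcal F(Q),\,x\in R}\frac{1}{|R|}\int_R |b-\langle b\rangle_R|\,dx,$$
which is the claimed inequality (the bound $\langle|b-\langle b\rangle_{P_i}|\rangle_{P_i}$ appearing on the right is handled by replacing $\langle b\rangle_{P_i}$ with $\langle b\rangle_R$ using the triangle inequality, absorbed into the constant).

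\emph{Step 3 (sparsity).} The stopping threshold guarantees $\sum_{R \in \mathrm{ch}(P)}|R|\le |P|/2$, so the tree $\mathcal F(Q)$ in isolation is $\tfrac12$-sparse via the reserves $E^{\mathcal F}_R := R\setminus\bigcup\mathrm{ch}(R)$. To fold in the given $\eta$-sparse structure of $\mathcal S$, I would allocate, for each $R\in\tilde{\mathcal S}$, a disjoint witness $\tilde E_R$ by playing off the two reserves against each other: on overlaps between a CZ-cube and an $\mathcal S$-cube lying inside its parent, the $\tfrac12$ of measure coming from the stopping condition can absorb the $\eta$-fraction guaranteed by $\mathcal S$, and distributing the mass by a weighted averaging with weights $1/(1+\eta)$ and $\eta/(1+\eta)$ yields the sharp bound $|\tilde E_R|\ge \eta/(2(1+\eta))\,|R|$.

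\emph{Main obstacle.} The delicate point is Step 3. The cubes of $\mathcal F(Q)$ launched from a given root $Q$ freely interlace with smaller cubes $Q' \in \mathcal S$ sitting inside $Q$: the original witness $E_{Q'}$ may be partly consumed by stopping cubes from $\mathcal F(Q)$, while $E^{\mathcal F}_R$ may conversely be partly consumed by cubes of $\mathcal S$ below $R$. Extracting disjoint witnesses of the sharp size $\eta/(2(1+\eta))$—rather than the crude $\min(\eta,\tfrac12)$ one gets by a naive union—is what drives the stated sparsity constant, and requires a careful case analysis organizing $\tilde{\mathcal S}$ by the nearest $\mathcal S$-ancestor.
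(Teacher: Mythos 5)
The paper does not give its own argument for Lemma \ref{lemma2.3}; it is stated with the tag ``(cf.\ \cite{LOR})'' and is quoted verbatim from Lerner--Ombrosi--Rivera-R\'ios. So there is no in-paper proof to compare your attempt against, and I am judging the proposal on its own.

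Your Steps 1 and 2 are essentially the standard argument and are fine: the recursive Calder\'on--Zygmund stopping at threshold $2\langle|b-\langle b\rangle_P|\rangle_P$ gives $\sum_{R\in\mathrm{ch}(P)}|R|\le|P|/2$, the dyadic-parent maximality gives $\langle|b-\langle b\rangle_P|\rangle_R\le 2^{n+1}\langle|b-\langle b\rangle_P|\rangle_P$ and hence $|\langle b\rangle_R-\langle b\rangle_P|\le 2^{n+1}\langle|b-\langle b\rangle_P|\rangle_P$, and the telescoping along the stopping chain (with Lebesgue differentiation both on the unstopped set and to handle an infinite chain, since $|P_m|\to 0$) yields the pointwise bound with constant $2^{n+2}$. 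One small side remark: the parenthetical about ``replacing $\langle b\rangle_{P_i}$ with $\langle b\rangle_R$ by the triangle inequality'' is unnecessary --- the quantities $\langle|b-\langle b\rangle_{P_i}|\rangle_{P_i}$ produced by the telescoping are already exactly the oscillations $\Omega(b;P_i)$ that appear on the right side of the claimed inequality.

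Step 3, however, is a genuine gap, and it is precisely the step that makes the lemma nontrivial. You correctly isolate the problem yourself in the ``Main obstacle'' paragraph, but ``playing off the two reserves'' with ``weights $1/(1+\eta)$ and $\eta/(1+\eta)$'' is a description of the hoped-for outcome, not an argument. Two concrete difficulties are left open. First, the reserves $E^{\mathcal F}_R$ from different trees $\mathcal F(Q)$ are only disjoint within a single tree; for nested $\mathcal S$-roots $Q_0\supset Q_1\supset\cdots$ the trees $\mathcal F(Q_i)$ freely interlace, so the sets $E^{\mathcal F}_R$ attached to one tree can be swallowed by cubes of another tree, and it is not clear that the global collection $\bigcup_Q\mathcal F(Q)$ is sparse at all without a further argument. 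Second, even granting sparseness of each piece, the union-of-two-sparse-families bound (via the Carleson reformulation, a $\gamma_1$-sparse and $\gamma_2$-sparse family give a $\gamma_1\gamma_2/(\gamma_1+\gamma_2)$-sparse union) does not by itself yield the advertised constant $\eta/(2(1+\eta))$ unless you first know the sparsity constant of $\bigcup_Q\mathcal F(Q)$ as a single family, which is exactly what is missing. To close the gap you would either need to verify the Carleson packing condition $\sum_{R\in\tilde{\mathcal S},\,R\subseteq P}|R|\le\frac{2(1+\eta)}{\eta}|P|$ for every $P\in\tilde{\mathcal S}$ by carefully organizing the stopping cubes according to their nearest $\mathcal S$-ancestor, or modify the construction so that the tree launched from $Q$ is truncated upon reaching cubes inside a strictly smaller $\mathcal S$-cube (and then check that the pointwise bound survives the truncation). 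As written, the proposal asserts the conclusion of Step 3 rather than proving it.
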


\subsection{Weighted $\rm{VMO}$ spaces}
The weighted $\rm{VMO}$ spaces were recently introduced by Lacey and Li \cite{LL} to character the two-weight compactness of commutators of Riesz transforms, see its definition below:
\begin{definition}\label{weighted VMO}
Let $1<p<\infty$ and $\lambda_1,\lambda_2\in A_p$, $\nu=\lambda_1^{1/p}\lambda_2^{-1/p}$. A function $b\in\rm{BMO}_\nu(\mathbb{R}^n)$ belongs to $\rm{VMO}_\nu(\mathbb{R}^n)$ if
\begin{itemize}
\item [(1)] $\lim\limits_{a\rightarrow0}\sup\limits_{B\subset\mathbb{R}^n,r_B=a}\frac{1}{\nu(B)}\int_{B}
|b(x)-\langle b\rangle_B|dx=0$;\\
\item [(2)] $\lim\limits_{a\rightarrow\infty}\sup\limits_{B\subset\mathbb{R}^n,r_B=a}\frac{1}{\nu(B)}\int_{B}
|b(x)-\langle b\rangle_B|dx=0$;\\
\item [(3)] $\lim\limits_{a\rightarrow\infty}\sup\limits_{B\subset\mathbb{R}^n\backslash B(x_0,a)}\frac{1}{\nu(B)}\int_{B}|b(x)-\langle b\rangle_B|dx=0$,
\end{itemize}
where $x_0$ is arbitrarily fixed point in $\mathbb{R}^n$.
\end{definition}
The weighted $\rm{VMO}$ spaces are very different from the classical $\rm{VMO}$ spaces, in \cite{LL}, the authors constructed an example to show that weighted $\rm{VMO}$ spaces is not necessarily the closure of $C_c^\infty(\mathbb{R}^n)$ under the weighted $\rm{BMO}$ norm when the dimension $n\geq2$.

Let $p,\lambda_1,\lambda_2,\nu$ be stated as Definition \ref{weighted VMO} and $\lambda_i'=\lambda_i^{-1/(p-1)}$, $i=1,2$. Recently, Chen et al. \cite{ChLLV} introduced the following two new definitions for weighted $\rm{VMO}$ spaces by $\rm{VMO}_{\lambda_1\lambda_2}(\mathbb{R}^n)$, $\rm{VMO}_{\lambda_1'\lambda_2'}(\mathbb{R}^n)$:
\begin{definition}\label{equivalent1}
Let $1<p<\infty$ and $\lambda_1,\lambda_2\in A_p$, $\nu=\lambda_1^{1/p}\lambda_2^{-1/p}$. A function $b\in\rm{BMO}_\nu(\mathbb{R}^n)$ belongs to $\rm{VMO}_{\lambda_1\lambda_2}(\mathbb{R}^n)$ if
\begin{itemize}
\item [(1)] $\lim\limits_{a\rightarrow0}\sup\limits_{B\subset\mathbb{R}^n,r_B=a}\Big(\frac{1}{\lambda_1(B)}\int_{B}
|b(x)-\langle b\rangle_B|^{p}\lambda_2(x)dx\Big)^{1/p}=0$;\\
\item [(2)] $\lim\limits_{a\rightarrow\infty}\sup\limits_{B\subset\mathbb{R}^n,r_B=a}\Big(\frac{1}{\lambda_1(B)}\int_{B}
|b(x)-\langle b\rangle_B|^{p}\lambda_2(x)dx\Big)^{1/p}=0$;\\
\item [(3)] $\lim\limits_{a\rightarrow\infty}\sup\limits_{B\subset\mathbb{R}^n\backslash B(x_0,a)}\Big(\frac{1}{\lambda_1(B)}\int_{B}
|b(x)-\langle b\rangle_B|^{p}\lambda_2(x)dx\Big)^{1/p}=0$,
\end{itemize}
where $x_0$ is arbitrarily fixed point in $\mathbb{R}^n$.
\end{definition}
\begin{definition}\label{equivalent2}
Let $1<p<\infty$ and $\lambda_1,\lambda_2\in A_p$, $\nu=\lambda_1^{1/p}\lambda_2^{-1/p}$. A function $b\in\rm{BMO}_\nu(\mathbb{R}^n)$ belongs to $\rm{VMO}_{\lambda_1\lambda_2}(\mathbb{R}^n)$ if
\begin{itemize}
\item [(1)] $\lim\limits_{a\rightarrow0}\sup\limits_{B\subset\mathbb{R}^n,r_B=a}\Big(\frac{1}{\lambda_2'(B)}\int_{B}
|b(x)-\langle b\rangle_B|^{p'}\lambda_1'(x)dx\Big)^{1/p'}=0$;
\\
\item [(2)] $\lim\limits_{a\rightarrow\infty}\sup\limits_{B\subset\mathbb{R}^n,r_B=a}\Big(\frac{1}{\lambda_2'(B)}\int_{B}
|b(x)-\langle b\rangle_B|^{p'}\lambda_1'(x)dx\Big)^{1/p'}=0$;\\
\item [(3)] $\lim\limits_{a\rightarrow\infty}\sup\limits_{B\subset\mathbb{R}^n\backslash B(x_0,a)}\Big(\frac{1}{\lambda_2'(B)}\int_{B}
|b(x)-\langle b\rangle_B|^{p'}\lambda_1'(x)dx\Big)^{1/p'}=0$,
\end{itemize}
where $x_0$ is arbitrarily fixed point in $\mathbb{R}^n$.
\end{definition}
Moreover, Chen et al. \cite{ChLLV} proved that the spaces $\rm{VMO}_{\lambda_1\lambda_2}(\mathbb{R}^n)$, $\rm{VMO}_{\lambda_1'\lambda_2'}(\mathbb{R}^n)$ are equivalent to $\rm{VMO}_\nu(\mathbb{R}^n)$, where $\lambda_1$, $\lambda_2$, $\nu$, $\lambda_1'$ and $\lambda_2'$ are stated as in Definition \ref{equivalent2}.

\section{Two weight compactness of commutators}
In this section, we give the proofs of $(1)$ in Theorems \ref{theorem1.1}, \ref{theorem1.2}, \ref{theorem1.6}. Now, we are in the position to prove $(1)$ in Theorem \ref{theorem1.1}.
\begin{proof}[Proof of Theorem \ref{theorem1.1}]
(1).\quad Let $(\mathcal{T}_{\mathcal{S},b}^\alpha)^\ast$ be defined as \eqref{sparse3}. Recall from \cite{AMPRR}, we know that $(\mathcal{T}_{\mathcal{S},b}^\alpha)^\ast$ is bounded from $L^{p}(\lambda_1^p)$ to $L^{q}(\lambda_2^q)$. Thus, $\mathcal{T}_{\mathcal{S},b}^\alpha$ is compact if and only if $(\mathcal{T}_{\mathcal{S},b}^\alpha)^\ast$ is compact from $L^{p}(\lambda_1^p)$ to $L^{q}(\lambda_2^q)$. So it suffices to prove that $(\mathcal{T}_{\mathcal{S},b}^\alpha)^\ast$ is compact from $L^{p}(\lambda_1^p)$ to $L^{q}(\lambda_2^q)$.

For any $\epsilon>0$, we decompose
$$(\mathcal{T}_{\mathcal{S},b}^\alpha)^\ast f(x)=\mathcal{T}_{\epsilon,N_\epsilon}^\alpha f(x)+\mathcal{T}_\epsilon^\alpha f(x).$$
In the following, we will prove that for any $\epsilon>0$, there exists $N_\epsilon>0$ such that $\mathcal{T}_{\epsilon,N_\epsilon}^\alpha f(x)$ is a sparse operator with finite range, that is,
\begin{align*}
\mathcal{T}_{\epsilon,N_\epsilon}^\alpha f(x)=\sum_{k=1}^{N_\epsilon}a_k\chi_{Q_k}(x)
\end{align*}
and
\begin{align*}
\|\mathcal{T}_{\epsilon}^\alpha f\|_{L^q(\lambda_2^q)}\leq\epsilon\|f\|_{L^p(\lambda_1^p)}.
\end{align*}

According to Definitions \ref{weighted VMO}-\ref{equivalent2}, $\lambda_1^p,\lambda_2^p\in A_p$ and
$$\rm{VMO}(\mathbb{R}^n)=\rm{VMO}_{\lambda_1^p\lambda_2^p}(\mathbb{R}^n)
=\rm{VMO}_{\lambda_1^{-p'}\lambda_2^{-p'}}(\mathbb{R}^n),$$
one can select $N,\delta>0$ and cube $Q_N$ with side length $N$ such that
\begin{equation}\label{1}
\frac{1}{\nu(Q)}\int_Q|b(x)-\langle b\rangle_Q|dx<\epsilon,
\end{equation}
\begin{equation}\label{2}
\Big(\frac{1}{\lambda_1^p(Q)}\int_Q|b(x)-\langle b\rangle_Q|^p\lambda_2(x)^pdx\Big)^{1/p}<\epsilon
\end{equation}
and
\begin{equation}\label{3}
\Big(\frac{1}{\lambda_2^{-p'}(Q)}\int_Q|b(x)-\langle b\rangle_Q|^{p'}\lambda_1(x)^{-p'}dx\Big)^{1/p'}<\epsilon,
\end{equation}
provided that $l_Q>N$, $l_Q<\delta$ and $Q\cap Q_N=\emptyset$.

For $Q\in\mathcal{S}$, observe that
\begin{align*}
(\mathcal{T}_{\mathcal{S},b}^\alpha)^\ast(|f|)(x)&=\sum_{Q\supset Q_N}\Big(\frac{1}{|Q|^{1-\alpha/n}}\int_Q|b(y)-\langle b\rangle_Q||f(y)|dy\Big)\chi_Q(x)\\
&\quad+\sum_{Q\cap Q_N=\emptyset}\Big(\frac{1}{|Q|^{1-\alpha/n}}\int_Q|b(y)-\langle b\rangle_Q||f(y)|dy\Big)\chi_Q(x)\\
&\quad+\sum_{Q\subset Q_N,l_Q<\delta}\Big(\frac{1}{|Q|^{1-\alpha/n}}\int_Q|b(y)-\langle b\rangle_Q||f(y)|dy\Big)\chi_Q(x)\\
&\quad+\sum_{Q\subset Q_N,l_Q>\delta}\Big(\frac{1}{|Q|^{1-\alpha/n}}\int_Q|b(y)-\langle b\rangle_Q||f(y)|dy\Big)\chi_Q(x)\\
&=:I(x)+II(x)+III(x)+IV(x).
\end{align*}

Note that there are finitely many cubes contained in $Q_N$ satisfying $\delta<l_Q<N$, we have that $IV(x)$ is a sparse operator with finite range, and then $IV(x)$ is a compact operator.

It remains to show that
$$\|I\|_{L^q(\lambda_2^q)},\|II\|_{L^q(\lambda_2^q)},\|III\|_{L^q(\lambda_2^q)}\lesssim
\epsilon\|f\|_{L^p(\lambda_1^p)}.$$

Invoking Lemma \ref{lemma2.3}, for a.e. $y\in Q$, we have
\begin{equation}\label{4}
|b(y)-\langle b\rangle_Q|\lesssim\sum_{R\in\tilde{\mathcal{S}},R\subset Q}\frac{1}{|R|}\int_R|b(x)-\langle b\rangle_R|dx\chi_R(y).
\end{equation}
From this and \eqref{1}, we deduce that
\begin{align}\label{C}
III(x)&\lesssim\sum_{Q\subset Q_N,l_Q<\delta}\sum_{R\in\tilde{\mathcal{S}},R\subset Q}
\Big(\frac{1}{|R|}\int_R|b(z)-\langle b\rangle_R|dz\frac{1}{|Q|^{1-\alpha/n}}\int_R|f(y)|dy\Big)\chi_Q(x)\\
&=\sum_{Q\subset Q_N,l_Q<\delta}\sum_{R\in\tilde{\mathcal{S}},R\subset Q}
\frac{1}{\nu(R)}\int_R|b(z)-\langle b\rangle_R|dz\Big(\frac{1}{|R|}\int_R|f(y)|\nu(R)dy\Big)
\frac{\chi_Q(x)}{|Q|^{1-\alpha/n}}\nonumber\\
&\leq\epsilon\sum_{Q\subset Q_N,l_Q<\delta}\Big(\sum_{R\in\tilde{\mathcal{S}},R\subset Q}\langle|f|\rangle_R\nu(R)\Big)\frac{\chi_Q(x)}{|Q|^{1-\alpha/n}}\nonumber\\
&\leq\epsilon\sum_{Q\subset Q_N,l_Q<\delta}\frac{1}{|Q|^{1-\alpha/n}}
\Big(\int_Q \mathcal{T}_{\tilde{\mathcal{S}}}(|f|)(y)\nu(y)dy\Big)\chi_Q(x)\nonumber\\
&\leq\epsilon \mathcal{T}_{\mathcal{S}}^\alpha(\mathcal{T}_{\tilde{\mathcal{S}}}(|f|)\nu)(x),\nonumber
\end{align}
where $\mathcal{T}_{\mathcal{S}}$ and $\mathcal{T}_{\mathcal{S}}^\alpha$ are defined as \eqref{2.2} and \eqref{2.3}, respectively.
Let $p,q,\alpha$ be given as Theorem \ref{theorem1.1}. Since $\lambda_2\in A_{p,q}$ and $\lambda_1^p\in A_p$, by \eqref{sparse boundedness1} and \eqref{sparse boundedness2}, we have
\begin{align}\label{C norm}
\|III\|_{L^q(\lambda_2^q)}&\lesssim\epsilon\|\mathcal{T}_\mathcal{S}^\alpha
(\mathcal{T}_{\tilde{\mathcal{S}}}(|f|)\nu)\|_{L^q(\lambda_2^q)}\\
&\lesssim\epsilon[\lambda_2]_{A_{p,q}}^{(1-\alpha/n)\max\{1,p/q'\}}
\|\mathcal{T}_{\tilde{\mathcal{S}}}(|f|)\nu\|_{L^p(\lambda_2^p)}\nonumber\\
&=\epsilon[\lambda_2]_{A_{p,q}}^{(1-\alpha/n)\max\{1,p/q'\}}
\Big(\int_{\mathbb{R}^n}\mathcal{T}_{\tilde{\mathcal{S}}}(|f|)(x)^p\nu(x)^p\lambda_2(x)^pdx\Big)^{1/p}\nonumber\\
&=\epsilon[\lambda_2]_{A_{p,q}}^{(1-\alpha/n)\max\{1,p/q'\}}
\Big(\int_{\mathbb{R}^n}\mathcal{T}_{\tilde{\mathcal{S}}}(|f|)(x)^p\lambda_1(x)^pdx\Big)^{1/p}\nonumber\\
&\lesssim\epsilon[\lambda_2]_{A_{p,q}}^{(1-\alpha/n)\max\{1,p/q'\}}
[\lambda_1^p]_{A_p}^{\max\{1,\frac{1}{p-1}\}}\|f\|_{L^p(\lambda_1^p)}.\nonumber
\end{align}

While for $II$. Since $Q\cap Q_N=\emptyset$ and $R\subset Q$, we have that for all $R\in\mathcal{S}$ in \eqref{4} and $R\cap Q_N=\emptyset$. Then using \eqref{1} and following a similar argument as $III$, we also get that
\begin{align}\label{B}
\|II\|_{L^q(\lambda_2^q)}\lesssim\epsilon\|f\|_{L^p(\lambda_1^p)}.
\end{align}

Finally, we deal with $I$. Let us begin with a collection of sparse dyadic cubes $Q=Q_1\supset Q_2\supset Q_3\supset\cdots\supset Q_{\tau_Q}\supset Q_{\tau_Q+1}=Q_N$, where $Q_{i+1}$ is the child of $Q_{i+1}$ with $i=1,2,\cdots,\tau_Q$. To guarantee the sparse property, we still denote the parent of $Q_{i+1}$ by $Q_{i+1}$ provided that the parent of $Q_{i+1}$ has only one child $Q_{i+1}$. Repeat this process until we find $Q_i$ has at least two children with $Q_{i+1}$ being one of them. For every $Q_i$, we denote all its dyadic children except $Q_{i+1}$ by $Q_{i,k}$, $k=1,2,\cdots,M_{Q_i}$. Then the number of the children of $Q_i$ is $M_{Q_i}+1$, and we can find a uniform constant $M$ such that $M_{Q_i}+1\leq M$. Therefore, as in \cite{ChLLV}, we have that for any $i=1,2,\cdots,\tau_Q$ and $k=1,2,\cdots,M_{Q_i}$,
$$Q_{i,k}\cap Q_N=\emptyset,\quad |Q_{i+1}|\sim|Q_{i,k}|$$
and there is a uniform constant $\tilde{\eta}\in(0,1)$ such that $|Q_{i+1}|\leq\tilde{\eta}|Q_i|$, which guarantee the sparse property of $\{Q_i\}_i$. Hence,
\begin{align}\label{A}
I(x)&\leq\sum_{Q\supset Q_N}\Big(\sum_{i=1}^{\tau_Q}\sum_{k=1}^{M_{Q_i}}\frac{1}{|Q|^{1-\alpha/n}}\int_{Q_{i,k}}
|b(y)-\langle b\rangle_{Q_{i,k}}||f(y)|dy\Big)\chi_Q(x)\\
&\quad+\sum_{Q\supset Q_N}\Big(\frac{1}{|Q|^{1-\alpha/n}}\int_{Q_{N}}
|b(y)-\langle b\rangle_{Q_{N}}||f(y)|dy\Big)\chi_Q(x)\nonumber\\
&\quad+\sum_{Q\supset Q_N}\Big(\sum_{i=1}^{\tau_Q}\sum_{k=1}^{M_{Q_i}}|\langle b\rangle_Q-\langle b\rangle_{Q_{i,k}}|\frac{1}{|Q|^{1-\alpha/n}}
\int_{Q_{i,k}}|f(y)|dy\Big)\chi_Q(x)\nonumber\\
&\quad+\sum_{Q\supset Q_N}|\langle b\rangle_{Q_N}-\langle b\rangle_Q|\frac{1}{|Q|^{1-\alpha/n}}\int_{Q_{N}}|f(y)|dy\chi_Q(x)\nonumber\\
&=:I_1(x)+I_2(x)+I_3(x)+I_4(x).\nonumber
\end{align}

First, we consider $I_2(x)$. For $g\in L^{q'}(\lambda_2^{-q'})$ with $\|g\|_{L^{q'}(\lambda_2^{-q'})}\leq1$, by duality and H\"{o}lder's inequality, we have
\begin{align*}
\|I_2\|_{L^q(\lambda_2^q)}&=\sup_{\|g\|_{L^{q'}(\lambda_2^{-q'})}\leq1}\Big|\Big\langle
\sum_{Q\supset Q_N}\Big(\frac{1}{|Q|^{1-\alpha/n}}\int_{Q_N}|b(y)-\langle b\rangle_{Q_N}||f(y)|dy\Big)\chi_Q(x),g(x)
\Big\rangle\Big|\\
&\leq\sum_{Q\supset Q_N}\frac{1}{|Q|^{1-\alpha/n}}\int_{Q_N}|b(y)-\langle b\rangle_{Q_N}||f(y)|dy\int_{Q}|g(x)|dx\\
&\leq\sum_{Q\supset Q_N}\frac{1}{|Q|^{1-\alpha/n}}\Big(\int_{Q_N}|b(y)-\langle b\rangle_{Q_N}|^{p'}\lambda_1(y)^{-p'}dy\Big)^{1/p'}
\Big(\int_{Q_N}|f(y)^p\lambda_1(y)^pdx\Big)^{1/p}\\
&\quad\times\Big(\int_Q|g(x)|^{q'}\lambda_2(x)^{-q'}dx\Big)^{1/q'}\lambda_2^q(Q)^{1/q}\\
&\leq\sum_{Q\supset Q_N}\frac{1}{|Q|^{1-\alpha/n}}\Big(\frac{1}{\lambda_2^{-p'}(Q_N)}
\int_{Q_N}|b(y)-\langle b\rangle_{Q_N}|^{p'}\lambda_1(y)^{-p'}dy\Big)^{1/p'}\\
&\quad\times\|f\|_{L^p(\lambda_1^p)}\lambda_2^{-p'}(Q_N)^{1/p'}\lambda_2^q(Q)^{1/q}.
\end{align*}
From the definition of $A_{p,q}$, we have
\begin{align}\label{Apq}
\frac{\lambda_2^{-p'}(Q_N)^{1/p'}\lambda_2^q(Q_N)^{1/q}}{|Q_N|^{1-\alpha/n}}\leq[\lambda_2]_{A_{p,q}}^{1/q}.
\end{align}
Since $\lambda_2^q\in A_{q(n-\alpha)/n}$, there is a constant $\sigma>0$ such that $\lambda_2^q\in A_{q(n-\alpha)/n-\sigma}$. Then
\begin{align}\label{doubling}
\frac{\lambda_2^q(Q)}{\lambda_2^q(Q_N)}\leq C\Big(\frac{|Q|}{|Q_N|}\Big)^{q(n-\alpha)/n-\sigma}.
\end{align}
Recall that $Q\in\mathcal{S}$, it follows that
\begin{align}\label{sparse property}
\sum_{Q\supset Q_N,Q\in\mathcal{S}}\Big(\frac{|Q_N|}{|Q|}\Big)^{\sigma/q}\leq C.
\end{align}
Therefore, by \eqref{3} and \eqref{Apq}-\eqref{sparse property}, we deduce that
\begin{align}\label{A2}
&\|I_2\|_{L^q(\lambda_2^q)}\\
&\quad\leq\epsilon\|f\|_{L^p(\lambda_1^p)}\sum_{Q\supset Q_N}\frac{\lambda_2^{-p'}(Q_N)^{1/p'}\lambda_2^q(Q_N)^{1/q}}{|Q_N|^{1-\alpha/n}}
\Big(\frac{|Q_N|}{|Q|}\Big)^{1-\alpha/n}\frac{\lambda_2^q(Q)^{1/q}}{\lambda_2^q(Q_N)^{1/q}}\nonumber\\
&\quad\lesssim\epsilon\|f\|_{L^p(\lambda_1^p)}[\lambda_2]_{A_{p,q}}^{1/q}\sum_{Q\supset Q_N}\Big(\frac{|Q_N|}{|Q|}\Big)^{1-\alpha/n}\Big(\frac{|Q|}{|Q_N|}\Big)^{(n-\alpha)/n-\sigma/q}\nonumber\\
&\quad=\epsilon\|f\|_{L^p(\lambda_1^p)}[\lambda_2]_{A_{p,q}}^{1/q}\sum_{Q\supset Q_N}\Big(\frac{|Q_N|}{|Q|}\Big)^{\sigma/q}\lesssim\epsilon\|f\|_{L^p(\lambda_1^p)}.\nonumber
\end{align}

For $I_1$. We change the order of the summation for $Q$ and $k$, so we may assume that $Q_{i,k}=\emptyset$ provided that $M_{Q_i}\leq k\leq M$. Then
\begin{align*}
I_1(x)=\sum_{k=1}^M\sum_{Q\supset Q_N}\Big(\sum_{i=1}^{\tau_Q}\frac{1}{|Q|^{1-\alpha/n}}\int_{Q_{i,k}}
|b(y)-\langle b\rangle_{Q_{i,k}}||f(y)|dy\Big)\chi_Q(x).
\end{align*}
Since $Q_{i,k}\cap Q_N=\emptyset$ and $R\subset Q_{i,k}$, Lemma \ref{lemma2.3} shows that
\begin{align*}
|b(y)-\langle b\rangle_{Q_{i,k}}|\lesssim\sum_{R\in\tilde{\mathcal{S}},R\subset Q_{i,k}}\frac{1}{|R|}\int_R|b(z)-\langle b\rangle_R|dz.
\end{align*}
And using $R\cap Q_N=\emptyset$ and \eqref{1}, we also have
\begin{align*}
\frac{1}{\nu(R)}\int_R|b(z)-\langle b\rangle_R|dz<\epsilon.
\end{align*}
Hence, similar to the estimate of \eqref{C} and \eqref{C norm}, we get that
\begin{align}\label{A1}
\|I_1\|_{L^q(\lambda_2^q)}\lesssim\epsilon\|f\|_{L^p(\lambda_1^p)}.
\end{align}

In the end, we deal with $I_3,I_4$. We first claim that for each fixed $k$ and $Q_{i,k}$, $i=1,2,\cdots,\tau_Q$.
$$\|A_{Q_{i,k}}\|_{L^q(\lambda_2^q)}\lesssim\epsilon\|f\|_{L^p(\lambda_1^p)},$$
where the implicit constant is independent of the cube $Q_{i,k}$ and
$$A_{Q_{i,k}}:=\sum_{Q\supset Q_N}\Big(\sum_{i=1}^{\tau_Q}|\langle b\rangle_{Q_{i,k}}-\langle b\rangle_Q|\frac{1}{|Q|^{1-\alpha/n}}\int_{Q_{i,k}}|f(y)|dy
\Big)\chi_Q(x).$$
Indeed, by $Q_j\supset Q_N$ and \eqref{1}, we have
\begin{align*}
|\langle b\rangle_{Q_{i,k}}-\langle b\rangle_Q|&\leq|\langle b\rangle_{Q_{i,k}}-\langle b\rangle_{Q_{i-1}}|+|\langle b\rangle_{Q_{i-1}}-\langle b\rangle_{Q_{i-2}}|+\cdots+|\langle b\rangle_{Q_2}-\langle b\rangle_Q|\\
&\leq\frac{1}{|Q_{i,k}|}\int_{Q_{i,k}}|b(x)-\langle b\rangle_{Q_{i-1}}|dx+\cdots+\frac{1}{|Q_2|}
\int_{Q_2}|b(x)-\langle b\rangle_Q|dx\\
&\lesssim\sum_{j=1}^{i-1}\frac{\nu(Q_j)}{|Q_j|}\Big(\frac{1}{\nu(Q_j)}\int_{Q_j}|b(x)-\langle b\rangle_{Q_j}|dx\Big)\leq\epsilon\sum_{j=1}^{i-1}\frac{\nu(Q_j)}{|Q_j|}.
\end{align*}
From this, we obtain
\begin{align*}
A_{Q_{i,k}}\lesssim\epsilon\sum_{Q\supset Q_N}\sum_{i=1}^{\tau_Q}\sum_{j=1}^{i-1}\frac{\nu(Q_j)}{|Q_j|}\frac{1}{|Q|^{1-\alpha/n}}
\int_{Q_{i,k}}|f(y)|dy\chi_Q(x).
\end{align*}
Therefore, by duality and H\"{o}lder's inequality, we have
\begin{align}\label{claim}
&\|A_{Q_{i,k}}\|_{L^q(\lambda_2^q)}\\
&\quad\lesssim\epsilon\sup_{\|g\|_{L^{q'}(\lambda_2^{-q'})\leq1}}
\Big|\Big\langle\sum_{Q\supset Q_N}\sum_{i=1}^{\tau_Q}\sum_{j=1}^{i-1}\frac{\nu(Q_j)}{|Q_j|}\frac{1}{|Q|^{1-\alpha/n}}
\int_{Q_{i,k}}|f(y)|dy\chi_Q(x),g(x)\Big\rangle\Big|\nonumber\\
&\quad\leq\epsilon\Big[\sum_{Q\supset Q_N}\sum_{i=1}^{\tau_Q}\sum_{j=1}^{i-1}\Big(\frac{\nu(Q_j)}{|Q_j|}\Big)^q
\Big(\int_{Q_{i,k}}|f(y)|dy\Big)^q\frac{1}{|Q|^{q-\alpha q/n}}\Big(\frac{|Q|}{|Q_{i,k}|}\Big)^{q\sigma'}\lambda_2^q(Q)\Big]^{1/q}\nonumber\\
&\qquad\times\Big[\sum_{Q\supset Q_N}\sum_{i=1}^{\tau_Q}\sum_{j=1}^{i-1}\Big(\frac{|Q_{i,k}|}{|Q|}\Big)^{q'\sigma'}
\Big(\int_Q|g(y)|dy\Big)^{q'}(\lambda_2^q(Q))^{-q'}\lambda_2^q(Q)\Big]^{1/q'},\nonumber
\end{align}
where $\sigma'=\frac{\sigma}{2q}$.
Note that $\{Q_i\}$ is a sparse family, there exists a constant $C>0$ such that
$$\sum_{i=1}^{\tau_Q}\log\Big(\frac{|Q|}{|Q_i|}\Big)\Big(\frac{|Q_i|}{|Q|}\Big)^{q'\sigma'}\leq C.$$
By \eqref{Ap property}, we have
\begin{align}\label{part1}
&\sum_{Q\supset Q_N}\sum_{i=1}^{\tau_Q}\sum_{j=1}^{i-1}\Big(\frac{|Q_{i,k}|}{|Q|}\Big)^{q'\sigma'}
\Big(\int_Q|g(y)|dy\Big)^{q'}(\lambda_2^q(Q))^{-q'}\lambda_2^q(Q)\\
&\quad\leq\sum_{Q\supset Q_N}\Big[\sum_{i=1}^{\tau_Q}\log\frac{|Q|}{|Q_i|}\Big(\frac{|Q_i|}{|Q|}\Big)^{q'\sigma'}\Big]
\lambda_2^q(Q)\nonumber\\
&\qquad\times\Big(\frac{1}{\lambda_2^q(Q)}\int_Q|g(y)|\lambda_2(y)^{-q}\lambda_2(y)^{q}dy
\Big)^{q'}\nonumber\\
&\quad\lesssim\sum_{Q\supset Q_N}\inf_{x\in Q}M_{\lambda_2^q}(|g|\lambda_2^{-q})(x)^{q'}\lambda_2^q(E_Q)\nonumber\\
&\quad\leq\sum_{Q\supset Q_N}\int_{E_Q}M_{\lambda_2^q}(|g|\lambda_2^{-q})(x)^{q'}\lambda_2(x)^qdx\nonumber\\
&\quad\leq\int_{\mathbb{R}^n}M_{\lambda_2^q}(|g|\lambda_2^{-q})(x)^{q'}\lambda_2(x)^qdx\nonumber\\
&\quad\lesssim\|g\lambda_2^{-q}\|_{L^{q'}(\lambda_2^q)}^{q'}=\|g\|_{L^{q'}(\lambda_2^{-q'})}^{q'}
\leq1.\nonumber
\end{align}

On the other hand, since
\begin{align*}
\nu(Q_j)^q=\Big(\int_{Q_j}\lambda_1\lambda_2^{-1}\Big)^q\leq\Big(\int_{Q_j}\lambda_1^q\Big)
\Big(\int_{Q_j}\lambda_2^{-q'}\Big)^{q/q'}=\lambda_1^q(Q_j)\lambda_2^{-q'}(Q_j)^{q/q'},
\end{align*}
we have
\begin{align}\label{part2}
&\sum_{Q\supset Q_N}\sum_{i=1}^{\tau_Q}\sum_{j=1}^{i-1}\Big(\frac{\nu(Q_j)}{|Q_j|}\Big)^q
\Big(\int_{Q_{i,k}}|f(y)|dy\Big)^q\frac{1}{|Q|^{q-\alpha q/n}}\Big(\frac{|Q|}{|Q_{i,k}|}\Big)^{q\sigma'}\lambda_2^q(Q)\\
&\quad\leq\sum_{Q\supset Q_N}\sum_{i=1}^{\tau_Q}\sum_{j=1}^{i-1}\|f\|_{L_{\lambda_1^p}^q(Q_{i,k})}^q
\frac{\lambda_1^{q}(Q_j)\lambda_2^{-q'}(Q_j)^{q/q'}}{|Q_j|^q}
\frac{\lambda_1^{-p'}(Q_{i,k})^{q/p'}\lambda_2^q(Q)}{|Q|^{q-\alpha q/n}}(|Q|/|Q_{i,k}|)^{q\sigma'}\nonumber\\
&\quad\leq\sum_{Q\supset Q_N}\sum_{i=1}^{\tau_Q}\sum_{j=1}^{i-1}\|f\|_{L_{\lambda_1^p}^p(Q_{i,k})}^q
(|Q_i|/|Q|)^{q-\alpha q/n}\frac{\lambda_1^{-p'}(Q_i)^{q/p'}\lambda_1^q(Q_i)}{|Q_i|^{q-q\alpha/n}}
\frac{\lambda_1^q(Q_j)}{\lambda_1^q(Q_i)}\nonumber\\
&\qquad\times\frac{\lambda_2^{-q'}(Q_j)^{q/q'}\lambda_2^q(Q_j)}{|Q_j|^q\lambda_2^q(Q_j)}
(|Q|/|Q_{i,k}|)^{q\sigma'}\lambda_2^q(Q)\nonumber\\
&\quad\lesssim[\lambda_1]_{A_{p,q}}[\lambda_2^q]_{A_q}\sum_{Q\supset Q_N}\sum_{i=1}^{\tau_Q}\sum_{j=1}^{i-1}\|f\|_{L_{\lambda_1^p}^p(Q_{i,k})}^q
(|Q|/|Q_j|)^{q-q\alpha/n-\sigma}\nonumber\\
&\qquad\times(|Q_i|/|Q|)^{q-\alpha q/n}(|Q_j|/|Q_i|)^{q-q\alpha/n-\sigma}
(|Q|/|Q_{i,k}|)^{q\sigma'}\nonumber\\
&\quad\leq[\lambda_1]_{A_{p,q}}[\lambda_2^q]_{A_q}\sum_{i=1}^\infty
\sum_{Q\supset Q_i}\log\Big(\frac{|Q|}{|Q_i|}\Big)(|Q_i|/|Q|)^{\sigma-q\sigma'}
\|f\|_{L_{\lambda_1^p}^p(Q_{i,k})}^q\nonumber\\
&\quad\lesssim[\lambda_1]_{A_{p,q}}[\lambda_2^q]_{A_q}\sum_{i=1}^\infty
\|f\|_{L_{\lambda_1^p}^p(Q_{i,k})}^q\nonumber\\
&\quad\leq[\lambda_1]_{A_{p,q}}[\lambda_2^q]_{A_q}
\|f\|_{L^p(\lambda_1^p)}^q,\nonumber
\end{align}
where in the third inequality, we use the fact that $\lambda_1^q,\lambda _2^q\in A_{q(n-\alpha)/n-\sigma}$ for some $\sigma>0$ and
\begin{align*}
\frac{\lambda_1^q(Q_j)}{\lambda_1^q(Q_i)}\lesssim
\Big(\frac{|Q_j|}{|Q_i|}\Big)^{\frac{q(n-\alpha)}{n}-\sigma},~
\frac{\lambda_2^q(Q)}{\lambda_2^q(Q_j)}\lesssim
\Big(\frac{|Q|}{|Q_j|}\Big)^{\frac{q(n-\alpha)}{n}-\sigma}.
\end{align*}

From \eqref{claim}-\eqref{part2}, we prove the claim. Since the implicit constant is independent of the cube $Q_{i,k}$, by \eqref{A}, we have
\begin{align*}
\|I_3\|_{L^q(\lambda_2^q)},\|I_4\|_{L^q(\lambda_2^q)}\lesssim\epsilon\|f\|_{L^p(\lambda_1^p)}.
\end{align*}
This together with \eqref{A2} and \eqref{A1} shows that
\begin{align}
\|I\|_{L^q(\lambda_2^q)}\lesssim\epsilon\|f\|_{L^p(\lambda_1^p)}.
\end{align}
Then, combining with \eqref{C norm} and \eqref{B}, we prove the result.
\end{proof}

Next, we prove $(1)$ of Theorem \ref{theorem1.2}, Corollary \ref{corollary1.3} and Theorem \ref{theorem1.6}.
\begin{proof}[Proof of Theorem \ref{theorem1.2}]
(1).\quad For fixed $x\in\mathbb{R}^n$ and $r>0$, note that
\begin{align}\label{lower estimate}
&\int_{\mathbb{R}^n}\frac{|b(x)-b(y)||f(y)|}{|x-y|^{n-\alpha}}dy\\
&\quad\geq\int_{|x-y|\leq r}\frac{|b(x)-b(y)||f(y)|}{|x-y|^{n-\alpha}}dy\nonumber\\
&\quad\geq\frac{1}{r^{n-\alpha}}\int_{|x-y|\leq r}|b(x)-b(y)||f(y)|dy,\nonumber
\end{align}
Taking supremum for $r>0$ on both sides of \eqref{lower estimate}, we obtain
$$\int_{\mathbb{R}^n}\frac{|b(x)-b(y)||f(y)|}{|x-y|^{n-\alpha}}dy\geq M_\alpha^bf(x).$$
By carefully checking the proof in \cite[Theorem 2.1]{AMPRR}, one can check that
$$\int_{\mathbb{R}^n}\frac{|b(x)-b(y)||f(y)|}{|x-y|^{n-\alpha}}dy\lesssim
\sum_{j=1}^{3^n}\mathcal{T}_{\mathcal{S}_j,b}^\alpha(|f|)(x)+
(\mathcal{T}_{\mathcal{S}_j,b}^\alpha)^\ast(|f|)(x).$$
Hence, $(1)$ of Theorem \ref{theorem1.2} follows from $(1)$ of Theorem \ref{theorem1.1}.
\end{proof}

\begin{proof}[Proof of Corollary \ref{corollary1.3}]
(1).\quad Note that $b$ is non-negative, then
\begin{align*}
|[b,M_\alpha]f(x)|&=|M_\alpha(bf)(x)-b(x)M_\alpha f(x)|\\
&=|M_\alpha(bf)(x)-M_\alpha(b(x)f)(x)|\\
&\leq M_\alpha(bf-b(x)f)(x)=M_\alpha^bf(x).
\end{align*}
Thus, the conclusion follows by $(1)$ in Theorem \ref{theorem1.2}.
\end{proof}

\begin{proof}[Proof of Theorem \ref{theorem1.6}]
(1).\quad It was proved in \cite{AMPRR} that
$$|[b,I_\alpha]f(x)|\lesssim\sum_{j=1}^{3^n}\mathcal{T}_{\mathcal{S}_j,b}^\alpha(|f|)(x)+
(\mathcal{T}_{\mathcal{S}_j,b}^\alpha)^\ast(|f|)(x).$$
Hence, the conclusion follows by $(1)$ in Theorem \ref{theorem1.1}.
\end{proof}

\section{proofs of (2) of Theorems \ref{theorem1.1}, \ref{theorem1.2}, \ref{theorem1.6}}
In this section, we prove (2) of Theorems \ref{theorem1.1}, \ref{theorem1.2}, \ref{theorem1.6}. Note that if the following order hold:
\begin{align*}
b\in{\rm VMO}_{\nu}(\mathbb{R}^n)\Rightarrow\forall\mathcal{S},\mathcal{T}_{\mathcal{S},b}^\alpha ~compact\Rightarrow M_b^\alpha~ compact\Rightarrow b\in\rm{VMO}_{\nu}(\mathbb{R}^n),
\end{align*}
\begin{align*}
b\in{\rm VMO}_{\nu}(\mathbb{R}^n)\Rightarrow\forall\mathcal{S},\mathcal{T}_{\mathcal{S},b}^\alpha ~compact\Rightarrow [b,I_\alpha]~ compact\Rightarrow b\in\rm{VMO}_{\nu}(\mathbb{R}^n),
\end{align*}
we complete the proofs of $(2)$ in Theorems \ref{theorem1.1}, \ref{theorem1.2}, \ref{theorem1.6}. Therefore, it suffices to show $(2)$ in Theorems \ref{theorem1.2}, \ref{theorem1.6}.
We first recall the following relevant definition.
\begin{definition} {\rm (cf. \cite{LOR1})}
By a median value of a real-valued measurable function $f$ over a measure set $E$ of positive finite measure, we mean a possibly non-unique, real number $m_f(E)$ such that
$$\max(|\{x\in E: f(x)>m_f(E)\}|,\,\,|\{x\in E: f(x)<m_f(E)\}|)\leq|E|/2.$$
\end{definition}

\begin{lemma}\label{new lemma}
Let $\alpha\in(0,n)$, $1<p<n/\alpha$, $1/p-1/q=\alpha/n$. Assume that $\lambda_1,\lambda_2\in A_{p,q}$, $\nu=\lambda_1/\lambda_2$, then for any cube $Q\subset \mathbb{R}^n$, we have
\begin{align*}
\frac{1}{\nu(Q)}\lesssim\frac{|Q|^{\alpha/n}}{\lambda_1^p(Q)^{1/p}\lambda_2^{-q'}(Q)^{1/q'}}.
\end{align*}
\end{lemma}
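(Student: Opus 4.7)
My plan is to reformulate the inequality in terms of averages and then prove it via the $A_\infty$ (reverse Jensen) characterization of Muckenhoupt weights, because standard Hölder alone will not deliver the desired direction of inequality.

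\textbf{Step 1 (Reduction to averages).} Writing $\langle\cdot\rangle_Q = |Q|^{-1}\int_Q$ and recalling $1/p+1/q'=1+\alpha/n$, so that $|Q|^{1/p+1/q'}=|Q|\cdot|Q|^{\alpha/n}$, the desired bound $\lambda_1^p(Q)^{1/p}\lambda_2^{-q'}(Q)^{1/q'}\lesssim |Q|^{\alpha/n}\nu(Q)$ reduces to showing
\begin{equation*}
\langle\lambda_1^p\rangle_Q^{1/p}\langle\lambda_2^{-q'}\rangle_Q^{1/q'}\lesssim \langle\nu\rangle_Q.
\end{equation*}
The key pointwise identity is $(\lambda_1^p)^{1/p}(\lambda_2^{-q'})^{1/q'}=\lambda_1\lambda_2^{-1}=\nu$, which suggests a geometric-mean approach. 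Note that because $1/p+1/q'>1$, a direct Hölder from the right-hand side down to $\langle\nu\rangle_Q$ is unavailable; I need the reverse direction.

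\textbf{Step 2 (Both weights are in $A_\infty$).} The preliminaries already record that $\lambda_1\in A_{p,q}$ implies $\lambda_1^p\in A_p\subset A_\infty$. For $\lambda_2^{-q'}$, I would verify $\lambda_2^{-q'}\in A_{q'}$ directly: Jensen with the concave map $x\mapsto x^{q'/p'}$ (valid since $q'<p'$) gives $\langle\lambda_2^{-q'}\rangle_Q\leq \langle\lambda_2^{-p'}\rangle_Q^{q'/p'}$, and combining this with the $A_{p,q}$ condition $\langle\lambda_2^q\rangle_Q\langle\lambda_2^{-p'}\rangle_Q^{q/p'}\lesssim 1$ (raised to the power $q'/q$) yields the $A_{q'}$ condition $\langle\lambda_2^{-q'}\rangle_Q\langle\lambda_2^q\rangle_Q^{q'-1}\lesssim 1$, after using $q'/(q'-1)=q$. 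Hence $\lambda_2^{-q'}\in A_{q'}\subset A_\infty$.

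\textbf{Step 3 (Reverse Jensen and finish).} By the Fujii--Hru\v{s}\v{c}ev--Wilson characterization of $A_\infty$, for any $A_\infty$ weight $\omega$ one has $\langle\omega\rangle_Q\lesssim \exp\langle\log\omega\rangle_Q$. Applying this to $\lambda_1^p$ and to $\lambda_2^{-q'}$, raising to the powers $1/p$ and $1/q'$ respectively, and multiplying gives
\begin{equation*}
\langle\lambda_1^p\rangle_Q^{1/p}\langle\lambda_2^{-q'}\rangle_Q^{1/q'}\lesssim \exp\!\Bigl(\tfrac{1}{p}\langle\log\lambda_1^p\rangle_Q+\tfrac{1}{q'}\langle\log\lambda_2^{-q'}\rangle_Q\Bigr)=\exp\langle\log\nu\rangle_Q.
\end{equation*}
A final application of the forward Jensen inequality (convexity of $\exp$) yields $\exp\langle\log\nu\rangle_Q\leq \langle\nu\rangle_Q$, and chaining these bounds completes the proof.

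\textbf{Main obstacle.} The only substantial point is recognizing that the Hölder-unfriendly exponent relation $1/p+1/q'=1+\alpha/n>1$ forces a passage through geometric means, which is exactly what the reverse-Jensen characterization of $A_\infty$ provides. The verification that $\lambda_2^{-q'}\in A_{q'}$ is short but requires careful tracking of the indices $p,q,p',q'$.
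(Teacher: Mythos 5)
Your proof is correct, but it takes a genuinely different route from the paper's. After reducing to $\langle\lambda_1^p\rangle_Q^{1/p}\langle\lambda_2^{-q'}\rangle_Q^{1/q'}\lesssim\langle\nu\rangle_Q$ (which you do correctly, and the exponent bookkeeping $|Q|^{1/p+1/q'}=|Q|^{1+\alpha/n}$ is right), you pass through the reverse-Jensen characterization of $A_\infty$ applied to $\lambda_1^p$ and $\lambda_2^{-q'}$, landing on $\exp\langle\log\nu\rangle_Q$ and then closing with forward Jensen. Your verification that $\lambda_2^{-q'}\in A_{q'}$ is also correct (and equivalently, $\lambda_2\in A_{p,q}$ gives $\lambda_2^q\in A_q$, hence the dual weight $(\lambda_2^q)^{1-q'}=\lambda_2^{-q'}\in A_{q'}$). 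The paper instead argues more elementarily: it starts from $1/\nu(Q)\leq\nu^{-1}(Q)/|Q|^2$ (forward Jensen on $1/x$), applies H\"older to $\nu^{-1}(Q)=\int_Q\lambda_1^{-1}\lambda_2$ with exponents $p',p$, upgrades $\langle\lambda_2^p\rangle_Q^{1/p}\leq\langle\lambda_2^q\rangle_Q^{1/q}$ by H\"older, then multiplies and divides by $\langle\lambda_1^p\rangle_Q^{1/p}\langle\lambda_2^{-q'}\rangle_Q^{1/q'}$ and bounds the two resulting numerator products by $A_{p,q}$ constants. Both arguments work; yours is conceptually cleaner in that it makes the geometric-mean structure of $\nu=(\lambda_1^p)^{1/p}(\lambda_2^{-q'})^{1/q'}$ transparent and avoids the exponent juggling, but it invokes the heavier Fujii--Hru\v{s}\v{c}ev--Wilson reverse-Jensen input, whereas the paper stays entirely within H\"older and the raw $A_{p,q}$ definition. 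One small note: the reverse-Jensen constant depends on $[\lambda_1^p]_{A_\infty}$ and $[\lambda_2^{-q'}]_{A_\infty}$, which is fine here since only the qualitative bound is needed.
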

\begin{proof}
Since $\lambda_1,\lambda_2\in A_{p,q}$, it is easy to see that $\nu\in A_2$. Using $p<q$, the H\"{o}lder inequality yields that
\begin{align*}
\Big(\frac{1}{|Q|}\int_Q\lambda_2(x)^{p}dx\Big)^{1/p}\leq
\Big(\frac{1}{|Q|}\int_Q\lambda_2(x)^{q}dx\Big)^{1/q}.
\end{align*}
Then using $\nu\in A_2$, $\lambda_1^p\in A_p,\lambda_2^q\in A_q$, $1/p-1/q=\alpha/n$ and the H\"{o}lder inequality, it follows that
\begin{align*}
\frac{1}{\nu(Q)}\lesssim\frac{\nu^{-1}(Q)}{|Q|^2}&=\frac{1}{|Q|^2}\int_Q\lambda_1(x)^{-1}
\lambda_2(x)dx\\
&\leq\frac{1}{|Q|^2}\Big(\int_Q\lambda_1(x)^{-p'}dx\Big)^{1/p'}
\Big(\int_Q\lambda_2(x)^{p}dx\Big)^{1/p}\\
&=\frac{1}{|Q|}\Big(\frac{1}{|Q|}\int_Q\lambda_1(x)^{-p'}dx\Big)^{1/p'}
\Big(\frac{1}{|Q|}\int_Q\lambda_2(x)^{p}dx\Big)^{1/p}\\
&\leq\frac{1}{|Q|}\frac{\Big(\frac{1}{|Q|}\int_Q\lambda_1(x)^{-p'}dx\Big)^{1/p'}
\Big(\frac{1}{|Q|}\int_Q\lambda_1(x)^{p}dx\Big)^{1/p}}
{\Big(\frac{1}{|Q|}\int_Q\lambda_1(x)^{p}dx\Big)^{1/p}}\\
&\quad\times\frac{\Big(\frac{1}{|Q|}\int_Q\lambda_2(x)^{q}dx\Big)^{1/q}
\Big(\frac{1}{|Q|}\int_Q\lambda_2(x)^{-q'}dx\Big)^{1/q'}}
{\Big(\frac{1}{|Q|}\int_Q\lambda_2(x)^{-q'}dx\Big)^{1/q'}}\\
&\lesssim\frac{|Q|^{\alpha/n}}{\lambda_1^p(Q)^{1/p}\lambda_2^{-q'}(Q)^{1/q'}}.
\end{align*}
\end{proof}

\begin{proof}[Proof of Theorem \ref{theorem1.2}]
(2).\quad Suppose that $b\in\rm{BMO_\nu(\mathbb{R}^n)}$ such that $M_\alpha^b$ is compact from $L^p(\lambda_1^p)$ to $L^q(\lambda_2^q)$. We will prove the result via a contradiction argument, so we assume that $b\not\in\rm{VMO_\nu(\mathbb{R}^n)}$.

The approach is as follows: on any Hilbert space $\mathcal{H}$, with canonical basis $e_j$, $j\in\mathbb{N}$, an operator with $Te_j=v$, with non-zero $v\in\mathcal{H}$, is necessarily unbounded. For $b\in\rm{BMO_\nu(\mathbb{R}^n)}\backslash\rm{VMO_\nu(\mathbb{R}^n)}$, we will prove a variant of this condition for the commutator $M_\alpha^b$ giving us the contradiction.

By the assumption of $b\not\in\rm{VMO_\nu(\mathbb{R}^n)}$, we see that $b$ fails to satisfy at least one of the three conditions in Definition \ref{weighted VMO}. We just show $(1)$ of Definition \ref{weighted VMO} does not hold since other cases are similar. According to Definition \ref{weighted VMO}, there exist $\epsilon_0>0$ and a sequence $\{B_j\}_{j=1}^\infty:=\{B_j(x_j,r_j)\}_{j=1}^\infty$ of balls such that $r_j\rightarrow0$ as $j\rightarrow\infty$ and that
\begin{align}\label{fanmian}
\frac{1}{\nu(B_j)}\int_{B_j}|b(x)-\langle b\rangle_{B_j}|dx\geq\epsilon_0.
\end{align}
Since $r_j\rightarrow0$ as $j\rightarrow\infty$, we can further assume without loss of generality that
\begin{align}\label{decay}
4r_{j+1}\leq r_j.
\end{align}
And for each $B_j$, one can select another ball $\tilde{B}_j$ such that $\tilde{r}_j=r_j$, $B_j\cap\tilde{B}_j=\emptyset$ and $d(B_j,\tilde{B}_j)=\inf_{x\in B_j,y\in\tilde{B}_j}|x-y|\leq5\tilde{r}_j$, where $\tilde{r}_j$ is the radius of ball $\tilde{B_j}$.

For $j=1,2,\cdots$, we denote the median value of $b$ on the ball $\tilde{B}_j$ by $m_b(\tilde{B}_j)$, set
$$F_{j,1}\subset\{y\in\tilde{B_j}:b(y)\leq m_b(\tilde{B}_j)\},\quad F_{j,2}\subset\{y\in\tilde{B_j}:b(y)\geq m_b(\tilde{B}_j)\}$$
such that
\begin{align}\label{Fj1}
|F_{j,1}|,|F_{j,2}|\geq\frac{1}{2}|\tilde{B}_j|.
\end{align}
We also set
$$E_{j,1}=\{x\in B_j:b(x)\geq m_b(\tilde{B}_j)\},\quad E_{j,2}=\{x\in B_j:b(x)< m_b(\tilde{B}_j)\}.$$
Then it is direct that $E_{j,1}\cup E_{j,2}=B_j$, $E_{j,1}\cap E_{j,2}=\emptyset$,
\begin{align*}
b(x)-b(y)\geq0\quad for~ (x,y)\in E_{j,1}\times F_{j,1},
\end{align*}
\begin{align*}
b(x)-b(y)<0\quad for~ (x,y)\in E_{j,2}\times F_{j,2},
\end{align*}
and for $(x,y)\in(E_{j,1}\times F_{j,1})\cup(E_{j,2}\times F_{j,2})$,
\begin{align}\label{invarient}
|b(x)-b(y)|=|b(x)-m_b(\tilde{B}_j)|+|m_b(\tilde{B}_j)-b(y)|\geq|b(x)-m_b(\tilde{B}_j)|.
\end{align}

Define
$$\tilde{F}_{j,1}:=F_{j,1}\backslash\cup_{l=j+1}^\infty\tilde{B_l},\quad \tilde{F}_{j,1}:=F_{j,2}\backslash\cup_{l=j+1}^\infty\tilde{B_l}.$$
Applying \eqref{decay} and \eqref{Fj1}, we have
\begin{align}\label{4.5}
|\tilde{F}_{j,1}|&\geq|F_{j,1}|-|\cup_{l=j+1}^\infty\tilde{B}_l|\\
&\geq\frac{1}{2}|\tilde{B}_j|-\sum_{l=j+1}^\infty|\tilde{B_l}|\nonumber\\
&\geq\frac{1}{2}|\tilde{B}_j|-\frac{1}{3}|\tilde{B}_j|=\frac{1}{6}|\tilde{B}_j|.\nonumber
\end{align}
Similarly, we also get that $|\tilde{F}_{j,2}|\geq\frac{1}{6}|\tilde{B}_j|$.

Note that
\begin{align*}
&\frac{1}{\nu(B_j)}\int_{B_j}|b(x)-\langle b\rangle_{B_j}|dx\\
&\quad\leq\frac{2}{\nu(B_j)}
\int_{B_j}|b(x)-m_b(\tilde{B}_j)|dx\\
&\quad=\frac{2}{\nu(B_j)}\Big(\int_{E_{j,1}}|b(x)-m_b(\tilde{B}_j)|dx+
\int_{E_{j,2}}|b(x)-m_b(\tilde{B}_j)|dx\Big).
\end{align*}
From this and \eqref{fanmian}, we have that at least one of the following inequalities holds:
\begin{align*}
\frac{2}{\nu(B_j)}\int_{E_{j,1}}|b(x)-m_b(\tilde{B}_j)|dx\geq\frac{\epsilon_0}{2},~
\frac{2}{\nu(B_j)}\int_{E_{j,2}}|b(x)-m_b(\tilde{B}_j)|dx\geq\frac{\epsilon_0}{2}.
\end{align*}
In the following, we only consider the first case since the other is similar. Invoking \eqref{invarient}, \eqref{4.5} and Lemma \ref{new lemma}, we deduce that
\begin{align*}
\frac{\epsilon_0}{4}&\leq\frac{1}{\nu(B_j)}\int_{E_{j,1}}|b(x)-m_b(\tilde{B}_j)|dx\\
&\lesssim\frac{|\tilde{F}_{j,1}|}{\nu(B_j)|B_j|}\int_{E_{j,1}}|b(x)-m_b(\tilde{B}_j)|dx\\
&\lesssim\frac{1}{\lambda_1^p(B_j)^{1/p}\lambda_2^{-q'}(B_j)^{1/q'}}\int_{E_{j,1}}
\int_{\tilde{F}_{j,1}}\frac{1}{|B_j|^{1-\alpha/n}}|b(x)-m_b(\tilde{B}_j)|dydx\\
&\leq\frac{1}{\lambda_2^{-q'}(B_j)^{1/q'}}\int_{E_{j,1}}\frac{1}{|\tilde{B}_j|^{1-\alpha/n}}
\int_{\tilde{B}_j}|b(x)-b(y)|f_j(y)dydx\\
&\leq\frac{1}{\lambda_2^{-q'}(B_j)^{1/q'}}\int_{E_{j,1}}M_\alpha^b(f_j)(x)dx,
\end{align*}
where $f_j:=\frac{\chi_{\tilde{F}_{j,1}}}{\lambda_1^p(B_j)^{1/p}}$. One can see that
$$\|f_j\|_{L^p(\lambda_1^p)}\sim1,$$
and $\{f_j\}$ is a sequence of disjointly supported functions. Furthermore, by H\"{o}lder's inequality
\begin{align*}
\epsilon_0&\lesssim\frac{1}{\lambda_2^{-q'}(B_j)^{1/q'}}\int_{E_{j,1}}M_\alpha^b(f_j)(x)
\lambda_2(x)\lambda_2(x)^{-1}dx\\
&\leq\frac{1}{\lambda_2^{-q'}(B_j)^{1/q'}}\big(\lambda_2^{-q'}(E_{j,1})\big)^{1/q'}
\Big(\int_{\mathbb{R}^n}M_\alpha^b(f_j)(x)^q\lambda_2(x)^qdx\Big)^{1/q}\\
&\leq\Big(\int_{\mathbb{R}^n}M_\alpha^b(f_j)(x)^q\lambda_2(x)^qdx\Big)^{1/q}.
\end{align*}

Now, we return to the assumption of compactness. Let $\psi$ be in the closure of $\{M_\alpha^b(f_j)\}_j$, then
$$\|\psi\|_{L^q(\lambda_2^q)}\gtrsim1.$$
And we select $j_i$ such that
$$\|\psi-M_\alpha^b(f_{j_i})\|_{L^q(\lambda_2^q)}\leq2^{-i}.$$
Let us consider a non-negative numerical sequence $\{c_i\}_i$ with $\|\{c_i\}\|_{l^{q'}}<\infty$ but $\|\{c_i\}\|_{l^{1}}=\infty$. Then for $\phi=\sum_ic_if_{j_i}\in L^p(\lambda_1^p)$, we have
\begin{align*}
&\Big\|\sum_i c_i\psi-M_\alpha^b\phi\Big\|_{L^q(\lambda_2^q)}\\
&\quad\leq\Big\|\sum_i c_i(\psi-M_\alpha^b(f_{j_i}))\Big\|_{L^q(\lambda_2^q)}\\
&\quad
\leq\|\{c_i\}\|_{l^{q'}}\Big[\sum_i\|\psi-M_\alpha^b(f_{j_i})\|_{L^q(\lambda_2^q)}^q
\Big]^{1/q}\lesssim1,
\end{align*}
which implies that $\sum_ic_i\psi\in L^q(\lambda_2^q)$. However, $\sum_ic_i\psi$ is infinite on a set of positive measure, this is a contradiction. Thus, we complete the proof.
\end{proof}

To show $(2)$ of Theorem \ref{theorem1.6}, we establish the following lemma.
\begin{lemma}\label{new lemma1}
Let $K_\alpha(x,y)=\frac{1}{|x-y|^{n-\alpha}}$. Then for each $A\geq4$ and each ball $B:=B(x_0,r)$, there exists a disjoint ball $\tilde{B}:=B(y_0,r)$ with dist$(B,\tilde{B})\sim Ar$, such that
$$K_\alpha(x,y)\gtrsim\frac{1}{r^{n-\alpha}}.$$
\end{lemma}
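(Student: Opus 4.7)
The plan is to prove the lemma by an explicit geometric construction together with elementary triangle-inequality estimates. The idea is to obtain $\tilde B$ as a translate of $B$ along a fixed direction, calibrated so that the centers sit at distance $(A+2)r$; this single choice simultaneously ensures disjointness, the requested comparability $\dist(B,\tilde B)\sim Ar$, and a matching upper bound on $|x-y|$ for pairs $(x,y)\in B\times\tilde B$, from which the kernel lower bound is immediate.

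Concretely, I would fix any unit vector $e\in\mathbb R^n$ (say $e=(1,0,\dots,0)$) and define
\[
y_0 := x_0+(A+2)r\,e, \qquad \tilde B := B(y_0,r).
\]
Since $A\geq 4$, we have $|x_0-y_0|=(A+2)r>2r$, so $B\cap\tilde B=\emptyset$, and
\[
\dist(B,\tilde B)=|x_0-y_0|-2r=Ar,
\]
which yields the first two conclusions. For the kernel estimate, the triangle inequality applied to any $x\in B$ and $y\in\tilde B$ gives
\[
|x-y|\leq|x-x_0|+|x_0-y_0|+|y_0-y|\leq r+(A+2)r+r=(A+4)r,
\]
whence
\[
K_\alpha(x,y)=\frac{1}{|x-y|^{n-\alpha}}\geq\frac{1}{\bigl((A+4)r\bigr)^{n-\alpha}}\gtrsim\frac{1}{r^{n-\alpha}},
\]
with the implicit constant depending only on $A$, $n$, and $\alpha$.

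The only (very mild) obstacle is to calibrate the translation length so that disjointness of $B$ and $\tilde B$, the prescribed separation $\dist(B,\tilde B)\sim Ar$, and the uniform upper bound $|x-y|\lesssim_A r$ on $B\times\tilde B$ all hold simultaneously; the choice $(A+2)r$ balances these three constraints with room to spare because $A\geq 4$. No deeper analytic input is required, which is what allows this short geometric lemma to be invoked as a black box in the proof of (2) of Theorem \ref{theorem1.6}, where it converts oscillation of $b$ on $B$ into a pointwise lower bound for $|[b,I_\alpha]f|$ on $\tilde B$ in analogy with the argument used for $M_\alpha^b$.
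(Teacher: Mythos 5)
Your construction is essentially the same as the paper's: you translate $B$ by a fixed multiple of $r$ in a fixed direction and use the triangle inequality to bound $|x-y|\lesssim_A r$ on $B\times\tilde B$, hence $K_\alpha(x,y)\gtrsim r^{\alpha-n}$. The only (cosmetic) difference is that the paper places $y_0=x_0+Ar\theta_0$ so that $\dist(B,\tilde B)=(A-2)r\sim Ar$, whereas you use $(A+2)r$ to get $\dist(B,\tilde B)=Ar$ exactly; both are correct.
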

\begin{proof}
Fix a ball $B:=B(x_0,r)$. For each $A\geq4$, take $y_0=x_0+Ar\theta_0$, where $\theta_0\in\mathbb{S}^{n-1}$. Let $\tilde{B}:=B(y_0,r)$, it is easy to see that dist$(B,\tilde{B})\sim Ar$ and for any $(x,y)\in(B\times\tilde{B})$,
$$|x-y|\leq(A+2)r,$$
It follows that
$$K_\alpha(x,y)=\frac{1}{|x-y|^{n-\alpha}}\gtrsim\frac{1}{r^{n-\alpha}}.$$
\end{proof}
\begin{proof}[Proof of Theorem \ref{theorem1.6}]
(2).\quad Suppose that $b\in\rm{BMO_\nu(\mathbb{R}^n)}$ such that $[b,I_\alpha]$ is compact from $L^p(\lambda_1^p)$ to $L^q(\lambda_2^q)$. We will prove the result via a contradiction argument, so we assume that $b\not\in\rm{VMO_\nu(\mathbb{R}^n)}$.

The approach is as follows: on any Hilbert space $\mathcal{H}$, with canonical basis $e_j$, $j\in\mathbb{N}$, an operator with $Te_j=v$, with non-zero $v\in\mathcal{H}$, is necessarily unbounded. For $b\in\rm{BMO_\nu(\mathbb{R}^n)}\backslash\rm{VMO_\nu(\mathbb{R}^n)}$, we will prove that a variant of this condition for the commutator $[b,I_\alpha]$ giving us the contradiction.

By the assumption of $b\not\in\rm{VMO_\nu(\mathbb{R}^n)}$, we see that $b$ fails to satisfy at least one of the three conditions in Definition \ref{weighted VMO}. We still show $(1)$ of Definition \ref{weighted VMO} does not hold. According to Definition \ref{weighted VMO}, there exist $\epsilon_0>0$ and a sequence $\{B_j\}_{j=1}^\infty:=\{B_j(x_j,r_j)\}_{j=1}^\infty$ of balls such that $r_j\rightarrow0$ as $j\rightarrow\infty$ and \eqref{fanmian}, \eqref{decay} hold.

Then, as in the proof of $(2)$ in Theorem \ref{theorem1.2}, we can define the sets $E_{j,1}$, $E_{j,2}$, $\tilde{F}_{j,1}$ and $\tilde{F}_{j,2}$. As before, we also consider the following case:
\begin{align*}
\frac{2}{\nu(B_j)}\int_{E_{j,1}}|b(x)-m_b(\tilde{B}_j)|dx\geq\frac{\epsilon_0}{2}.
\end{align*}
Applying \eqref{invarient} and Lemmas \ref{new lemma}, \ref{new lemma1}, we deduce that
\begin{align*}
\frac{\epsilon_0}{4}&\leq\frac{1}{\nu(B_j)}\int_{E_{j,1}}|b(x)-m_b(\tilde{B}_j)|dx\\
&\lesssim\frac{|\tilde{F}_{j,1}|}{\nu(B_j)|B_j|}\int_{E_{j,1}}|b(x)-m_b(\tilde{B}_j)|dx\\
&\lesssim\frac{1}{\lambda_1^p(B_j)^{1/p}\lambda_2^{-q'}(B_j)^{1/q'}}\int_{E_{j,1}}
\int_{\tilde{F}_{j,1}}K_\alpha(x,y)|b(x)-m_b(\tilde{B}_j)|dydx\\
&\leq\frac{1}{\lambda_2^{-q'}(B_j)^{1/q'}}\Big|\int_{E_{j,1}}
\int_{\tilde{B}_j}K_\alpha(x,y)(b(x)-b(y))f_j(y)dydx\Big|\\
&\leq\frac{1}{\lambda_2^{-q'}(B_j)^{1/q'}}\int_{E_{j,1}}|[b,I_\alpha](f_j)(x)|dx,
\end{align*}
where $f_j:=\frac{\chi_{\tilde{F}_{j,1}}}{\lambda_1^p(B_j)^{1/p}}$. One can see that
$$\|f_j\|_{L^p(\lambda_1^p)}\sim1,$$
and $\{f_j\}$ is a sequence of disjointly supported functions. Furthermore, by H\"{o}lder's inequality
\begin{align*}
\epsilon_0&\lesssim\frac{1}{\lambda_2^{-q'}(B_j)^{1/q'}}\int_{E_{j,1}}[b,I_\alpha]b(f_j)(x)
\lambda_2(x)\lambda_2(x)^{-1}dx\\
&\leq\frac{1}{\lambda_2^{-q'}(B_j)^{1/q'}}\big(\lambda_2^{-q'}(E_{j,1})\big)^{1/q'}
\Big(\int_{\mathbb{R}^n}[b,I_\alpha](f_j)(x)^q\lambda_2(x)^qdx\Big)^{1/q}\\
&\leq\Big(\int_{\mathbb{R}^n}[b,I_\alpha](f_j)(x)^q\lambda_2(x)^qdx\Big)^{1/q}.
\end{align*}

Now, we return to the assumption of compactness. Let $\psi$ be in the closure of $\{[b,I_\alpha](f_j)\}_j$, then
$$\|\psi\|_{L^q(\lambda_2^q)}\gtrsim1.$$
And we select $j_i$ such that
$$\|\psi-[b,I_\alpha](f_{j_i})\|_{L^q(\lambda_2^q)}\leq2^{-i}.$$
Let us consider a non-negative numerical sequence $\{c_i\}_i$ with $\|\{c_i\}\|_{l^{q'}}<\infty$ but $\|\{c_i\}\|_{l^{1}}=\infty$. Then for $\phi=\sum_ic_if_{j_i}\in L^p(\lambda_1^p)$, we have
\begin{align*}
&\Big\|\sum_i c_i\psi-[b,I_\alpha]\phi\Big\|_{L^q(\lambda_2^q)}\\
&\quad\leq\Big\|\sum_i c_i(\psi-[b,I_\alpha](f_{j_i}))\Big\|_{L^q(\lambda_2^q)}\\
&\quad
\leq\|\{c_i\}\|_{l^{q'}}\Big[\sum_i\|\psi-[b,I_\alpha](f_{j_i})\|_{L^q(\lambda_2^q)}^q
\Big]^{1/q}\lesssim1,
\end{align*}
which implies that $\sum_ic_i\psi\in L^q(\lambda_2^q)$. However, $\sum_ic_i\psi$ is infinite on a set of positive measure, this is a contradiction. Thus, we complete the proof.
\end{proof}

\end{document}